\def\PP{{\textbf P}}
\def\OO{\mathcal{O}}
\def\cD{\mathcal{D}}
\def\cB{\mathcal{B}}
\def\cA{\mathcal{A}}
\def\cV{\mathcal{V}}
\def\P{\mathcal{P}}
\def\K{\mathcal{K}}
\def\L{\mathcal{L}}
\def\cM{\mathcal{M}}
\def\cR{\mathcal{R}}
\def\rr{\overline{\mathcal{R}}}
\def\cC{\mathcal{C}}
\def\mm{\overline{\mathcal{M}}}
\def\ss{\overline{\mathcal{S}}}
\def\dd{\overline{\mathcal{D}}}
\def\pm{\widetilde{\mathcal{M}}}
\def\pg{\widetilde{\mathcal{G}}}
\def\pr{\widetilde{\mathcal{R}}}
\def\prg{\widetilde{\mathcal{RG}}}
\def\ddd{\overline{\mathcal{D}'}}
\def\rk{\operatorname{rk}}
\newcommand{\dblq}{{/\!/}}
\newtheorem{theorem}{Theorem}[section]
\newtheorem*{theorem*}{Theorem}
\newtheorem*{problem*}{Problem}
\newtheorem{lemma}[theorem]{Lemma}
\newtheorem{corollary}[theorem]{Corollary}
\newtheorem*{corollary*}{Corollary}
\theoremstyle{definition}
\newtheorem*{definition*}{Definition}
\newtheorem*{remark*}{Remark}
\newtheorem*{notation*}{Notation}
\begin{document}

\title[The uniruledness of the Prym moduli space of genus $9$]
{The uniruledness of the Prym moduli space of genus $9$}

\author[G. Farkas]{Gavril Farkas}
\address{Farkas:  Humboldt-Universit\"at zu Berlin, Institut f\"ur Mathematik, \hfill \newline
\indent Unter den Linden 6,
10099 Berlin, Germany}
\email{{\tt farkas@math.hu-berlin.de}}

\author[A. Verra]{Alessandro Verra}
\address{Verra: Universit\'a Roma Tre, Dipartimento di Matematica, Largo San Leonardo Murialdo \hfill
 \newline \indent 1-00146 Roma, Italy}
 \email{{\tt verra@mat.uniroma3.it}}

\begin{abstract}
We show that the moduli space $\rr_9$ of Prym curves of genus $9$ is uniruled. This is the largest genus for which such a result is known to hold.
\end{abstract}

\maketitle

The moduli space $\cR_g$ parametrizing pairs $[C, \eta]$, where $C$ is a smooth curve of genus $g$ and $\eta\in \mbox{Pic}^0(C)[2]$ is a (non-trivial) $2$-torsion point in the Jacobian of $C$ has traditionally received considerable attention in the context of finding a uniformization of the moduli space $\cA_{g-1}$ of principally polarized abelian varieties of dimension $g-1$ via the Prym map $P_g\colon \cR_g\rightarrow \cA_{g-1}$, see \cites{B,CMGHL, Don}. In particular, in small genus when $P_g$ is a dominant map any result on the birational geometry of $\cR_g$ has direct consequences for $\cA_{g-1}$. It is known that $\cR_g$ is rational for $g=2,3,4$ (see \cite{Dol,Ca}), whereas $\cR_5$ is unirational \cites{ILS, V2}. In genus $6$  the Prym map $P_6\colon \cR_6\rightarrow \cA_5$ is finite of degree $27$ and there are at least two fundamentally different ways of showing that $\cR_6$ is unirational, see \cites{Don, V1}. Using Nikulin surfaces (that is, $K3$ surfaces endowed with a symplectic involution), we showed that $\cR_7$ is unirational as well \cites{FV1}, whereas $\cR_8$ is uniruled, see \cites{FV2}.

\vskip 4pt

The Prym moduli space $\cR_g$ admits a Deligne-Mumford compactification $\rr_g:=\mm_g\bigl(\mathcal{B} \mathbb Z_2\bigr)$, which can be interpreted either as the moduli space of stable maps from curves of genus $g$ to the classifying stack $\mathcal{B}\mathbb Z_2$, or in the spirit of Cornalba's work \cite{Cor}, as the stack of stable Prym curves of genus $g$, see \cites{BCF, FL}. We denote by $\pi\colon \rr_g\rightarrow \mm_g$ the morphism forgetting the Prym structure. It has been shown in \cite{FL} that $\rr_g$ is a variety of general type whenever $g\geq 14$ and $g\neq 15,16$. Bruns \cite{Br} extended this result and showed that $\rr_{15}$ is of general type. By making use of non-abelian Brill-Noether theory and tropical methods, it has been recently shown in \cite{FJP2} that $\rr_{13}$ is of general type as well.
By combining  results from \cites{CEFS, FL}, it also follows  that the Kodaira dimension of $\rr_{12}$ is non-negative. On the other hand the case of $\rr_{16}$ remains open, due to the highly surprising expected failure of the Prym-Green Conjecture in genus $16$, as discussed at length in \cite[Proposition 4.4]{CEFS}\footnote{It is precisely the failure locus of the Prym-Green Conjecture on syzygies of Prym-canonical curves which is used to show in \cite{FL} that $\rr_g$ is of general type for a given even genus $g$.}. However, we expect that $\rr_{16}$ is of general type as well. The aim of this paper is to establish the following result:
\begin{theorem}\label{thm:main}
The Prym moduli space $\rr_9$ is uniruled.
\end{theorem}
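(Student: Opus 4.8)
The plan is to exhibit a dominant rational map onto $\rr_9$ from a projective bundle (or a family of rational varieties) over some unirational (or at least uniruled) parameter space, following the strategy that worked for $\rr_7$ and $\rr_8$: namely, realize a general Prym curve $[C,\eta]$ of genus $9$ inside a suitable surface $S$ carrying extra structure that forces the line bundle $\eta$, and then show that the moduli of such surfaces-with-curve is uniruled. For genus $8$ and $9$ the natural candidates are again Nikulin surfaces, or their degenerations, or polarized $K3$ surfaces of the appropriate degree: a general curve $C$ of genus $9$ lies on a $K3$ surface $S$ of degree $16$ in $\PP^9$ (Mukai's model), and one needs the $2$-torsion class $\eta$ to be induced by a Nikulin-type involution or by the difference of two elliptic pencils on $S$. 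Concretely, I would look for a lattice-polarized $K3$ surface $S$ whose Picard lattice contains a primitive vector $h$ with $h^2=16$ (so $|h|$ maps $S$ to a genus-$9$ curve) together with a configuration producing a $2$-divisible class that restricts to $\eta$ on the generic hyperplane section; the Nikulin lattice, or an extension of it, is the arithmetic input.

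The key steps, in order: (1) fix the relevant $K3$ (or Nikulin) lattice $N$ of rank small enough that the moduli space $\F_N$ of $N$-polarized $K3$ surfaces is unirational — here one uses the classification results of Gritsenko--Hulek--Sankaran and Nikulin to check unirationality, and this is usually known for the ranks in play; (2) show that for a general $[S]\in\F_N$ and a general curve $C\in |h|$ the induced pair $(C,\eta)$ has $C$ a general curve of genus $9$ and $\eta$ a general $2$-torsion point, i.e. that the classifying map $\F_N \dashrightarrow \rr_9$, or rather the associated map from the universal linear system, is dominant — this is a dimension count plus a tangent-space / deformation argument showing the differential is surjective at a general point; (3) conclude that $\rr_9$ is dominated by a $\PP^9$-bundle (the universal $|h|$) over the unirational base $\F_N$, whence $\rr_9$ is uniruled, since a variety dominated by a uniruled variety is uniruled. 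Along the way one has to compactify compatibly, landing in $\rr_9 = \mm_9(\cB\ZZ_2)$, and check that the general point of the image is a genuine smooth Prym curve, not a boundary point.

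The main obstacle I expect is step (2): verifying that the $2$-torsion bundle obtained on $C$ from the $K3$ geometry is \emph{general} in $\mbox{Pic}^0(C)[2]$, and that the whole pair varies with $9+15 = 24 = \dim \rr_9$ moduli. The dimension bookkeeping is delicate because imposing the Nikulin lattice cuts the dimension of the $K3$ moduli, and one must confirm that the universal linear system still has the full expected dimension mapping onto $\rr_9$; a miscount here is the usual failure mode. The cleanest way to control this is an explicit tangent-space computation at a carefully chosen special $K3$ surface (e.g.\ one with a known Picard lattice and an explicit curve), using the normal bundle sequence $0\to T_C \to T_S|_C \to N_{C/S}\to 0$ together with $N_{C/S}\cong \omega_C\otimes\OO_C(-\text{something})$ and the identification of $\eta$ with $\OO_S(D)|_C$ for a suitable $D$, and then checking $H^1$-vanishings to deduce dominance. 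A secondary obstacle is ensuring unirationality (not merely uniruledness) of $\F_N$ in the relevant range — if that is not directly available in the literature for the chosen $N$, one may have to replace $K3$ surfaces by a different ambient surface (an Enriques surface, or a conic bundle, or a rational surface with a suitable line arrangement encoding the $2$-torsion) for which rationality of the parameter space is manifest, at the cost of a more involved construction of the Prym curve.
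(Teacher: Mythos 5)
Your strategy breaks down at step (2), and the failure is not the ``delicate bookkeeping'' you anticipate but a hard dimension obstruction. For the $K3$ geometry to induce a non-trivial $2$-torsion class on the hyperplane section one needs (by van Geemen--Sarti) the full Nikulin lattice $\mathfrak{N}$ of rank $8$ inside $\mathrm{Pic}(S)$ in addition to the polarization, so the relevant lattice has rank $9$ and the moduli space $\mathcal{F}_9^{\mathfrak{N}}$ of polarized Nikulin surfaces of genus $9$ has dimension $20-9=11$. The universal linear system $|C|$ adds $g=9$ parameters, so the family of Prym curves you propose has dimension at most $11+9=20$, whereas $\dim \rr_9=3\cdot 9-3=24$. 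No tangent-space computation can rescue this: the classifying map $\mathcal{F}_N\dashrightarrow \rr_9$ is never dominant for $g\geq 8$. This is exactly why the Nikulin-surface method yields unirationality only through $g=7$ (where $11+7=18=\dim\cR_7$), and why a different argument was already needed for $\cR_8$. Replacing $K3$'s by a rank-$2$ lattice-polarized family does not help either, since a single class $D$ with $D\cdot C=0$ does not in general restrict to a $2$-torsion line bundle on $C$; the $2$-divisibility of $N_1+\cdots+N_8$ is what forces the torsion, and it costs the full rank $8$.

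The paper takes an entirely different route, via the pseudo-effectivity criterion of \cite{BDPP} rather than a dominating rational family. It introduces the Brill--Noether divisor $\cD_9=\{[C,\eta]:\exists\, L\in W^2_8(C) \text{ with } H^0(C,L\otimes\eta)\neq 0\}$, realizes a general point of (each component of) $\dd_9$ as a $12$-nodal plane octic $\Gamma$ together with a second octic $\Gamma'$ with the same nodes and tangent to $\Gamma$ at $8$ points, and uses the pencil spanned by $\Gamma$ and $\Gamma'$ to produce a rational curve $R$ sweeping out a component $\ddd$ of $\dd_9$ with $R\cdot K_{\rr_9}=-1$ and $R\cdot\ddd\geq 0$. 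If $K_{\rr_9}$ were pseudo-effective one would write $K_{\rr_9}\equiv a\,\ddd+M$ with $M$ pseudo-effective not containing $\ddd$, forcing $R\cdot K_{\rr_9}\geq 0$, a contradiction; then \cite{BDPP} gives uniruledness. Nikulin surfaces do appear in the paper, but only as test curves: a Lefschetz pencil of Prym curves on a general Nikulin surface is shown to be disjoint from $\dd_9$ (via a Lazarsfeld--Mukai bundle argument), which pins down the relation among the coefficients of any component of $[\dd_9]$ needed to prove $R\cdot\ddd\geq 0$. So your instinct that rational and Nikulin surfaces are relevant is correct, but the logical role they play is to control a divisor and its sweeping curves, not to uniformize $\rr_9$.
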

Note that $9$ is the largest genus $g$ for which $\rr_g$ is known to have negative Kodaira dimension. This leaves $\rr_{10}$ and $\rr_{11}$ as the only Prym moduli spaces where there is not even a conjectural description concerning their birational nature. In comparison, the Kodaira dimension of $\mm_g$ remains unknown for $g=17, \ldots, 21$, see \cites{HM, EH, FJP1}, whereas the Kodaira dimension of both components
$\overline{\mathcal{S}}_g^+$ and $\overline{\mathcal{S}}_g^-$ of the moduli space of spin curves is known for all $g$, see \cite{FV3}. Note that $\ss_9^+$ is known to be of general type, whereas $\ss_9^-$ is uniruled \cite{FV3} and $g=9$ is the first case where $\ss_g^-$ is not known to be unirational. It remains an open question, whether one can find a unirational parametrization for $\rr_9$.

\vskip 4pt

The proof of Theorem \ref{thm:main} relies in an essential way on the study of the following effective divisor of Brill-Noether type on the moduli space $\cR_9$
$$\cD_9:=\Bigl\{[C, \eta]\in \cR_9: \exists L\in W^2_8(C) \ \mbox{ such that } \ H^0(C,L\otimes \eta)\neq 0\Bigr\},$$
where $W^2_8(C)$ is the Brill-Noether variety of line bundles $L\in \mbox{Pic}^8(C)$ with $h^0(C,L)\geq 3$.
By standard Brill-Noether theory, a general curve $C$ of genus $9$ has a \emph{finite} number of linear systems $L\in W^2_8(C)$ and  the geometric condition defining $\cD_9$ can be reformulated as requiring that the line bundle $L\otimes \eta\in \mbox{Pic}^8(C)$ belongs to the theta divisor of $C$. As such, this condition is obviously divisorial in moduli. On the one hand, we can extend the determinantal structure defining $\cD_9$ to the boundary of $\rr_9$ and thus compute the class of the closure  of  $\cD_9$ in $\rr_9$ and we obtain the following formula, see also Theorem \ref{thm:divisor_class}:
\begin{theorem}\label{thm:div_intro}
The class of the closure of the Brill-Noether divisor in $\rr_9$ is given by
\begin{equation}\label{eq:classd9}
[\overline{\cD}_9]=366\lambda-52\bigl(\delta_0'+\delta_0''\bigr)-\frac{187}{2}\delta_0^{\mathrm{ram}}-E,
\end{equation}
where $E$ is an effective combination of boundary divisors of $\rr_9$ that \emph{does not} contain the boundary classes $\delta_0'$ and $\delta_0^{\mathrm{ram}}$.
\end{theorem}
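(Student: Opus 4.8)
The plan is to compute the class of $\overline{\cD}_9$ by realizing it as the pushforward of a degeneracy locus on a universal family, and then controlling the boundary contributions carefully enough to extract the exact coefficients of $\lambda$, $\delta_0'$, and $\delta_0^{\mathrm{ram}}$. First I would work on the open Prym moduli stack: over the generic locus, the Brill--Noether number for $g^2_8$ on a genus $9$ curve is $\rho(9,2,8)=0$, so a general curve carries finitely many $L\in W^2_8(C)$; I would introduce the universal Brill--Noether space $\pi\colon \mathcal{G}^2_8\to \cR_9$ (a finite cover), and over it form the vector bundle morphism whose generic fibre computes $H^0(C,L\otimes \eta)$ versus $H^0(C,L\otimes\eta)$ twisted appropriately, so that $\cD_9$ is the image of the locus where a map of vector bundles of the same rank drops rank, i.e. where a determinant vanishes. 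The $\lambda$-coefficient then comes from Grothendieck--Riemann--Roch applied to this determinant line bundle, using the standard Bott-type computations for the tautological classes on $\mathcal{G}^2_8$ together with the Prym-specific fact that twisting by a $2$-torsion point does not change degrees or Euler characteristics. This is essentially parallel to the genus $8$ computation in \cite{FV2} and to the Brill--Noether divisor class calculations going back to \cite{EH}, so I expect the interior computation to be routine modulo bookkeeping.

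The substantive work is the boundary analysis. The three boundary divisors of $\rr_9$ in codimension one that matter here are $\Delta_0'$, $\Delta_0''$ (the two ``non-ramification'' type nodal degenerations, distinguished by whether $\eta$ restricts trivially or not to the normalization), and $\Delta_0^{\mathrm{ram}}$ (Wirtinger/ramification type, where the Prym structure involves a line bundle on a nodal quasi-stable curve). I would extend the determinantal description of $\cD_9$ over a neighbourhood of each of these divisors by choosing a suitable limit of the linear systems $g^2_8$ — this is where admissible covers / limit linear series enter — and then compute the multiplicity with which $\overline{\cD}_9$ meets a general test curve transverse to each boundary component. For $\delta_0'$ and $\delta_0''$ the key point is that the symmetry between the two sheets forces equal coefficients (hence the grouping $\delta_0'+\delta_0''$ in the formula), and a local deformation argument — counting how many of the limiting $L\otimes\eta$ hit the (limit) theta divisor — pins down the coefficient $-52$. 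For $\delta_0^{\mathrm{ram}}$ one has the extra Wirtinger factor of $\frac12$ coming from the automorphism/stack structure along that boundary divisor (and the fact that the relevant line bundle has degree a half-integer in the naive normalization), which produces the $-\frac{187}{2}$ coefficient; I would verify this by the same test-curve technique, being careful that the limit linear series spread out correctly across the exceptional component.

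For all the remaining boundary divisors of $\rr_9$ — the $\delta_i', \delta_i'', \delta_i^{\mathrm{ram}}$ for $i\geq 1$, plus any further $\delta_0$-type strata — I would only need to show that $\overline{\cD}_9$ meets them with \emph{non-negative} multiplicity, which is automatic once we know $\overline{\cD}_9$ is an honest effective divisor (the closure of an effective cycle on the open part) and that its support does not contain these components. That, together with the fact that on these higher boundary divisors a general Prym curve admits no $g^2_8$ through the node pattern forcing containment, lets me absorb everything into the effective class $E$ without computing it, and lets me assert $E$ does not involve $\delta_0'$ or $\delta_0^{\mathrm{ram}}$ because those coefficients were computed exactly (not just bounded) in the previous step.

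The main obstacle I anticipate is the boundary bookkeeping for $\delta_0^{\mathrm{ram}}$: the ramification (Wirtinger) boundary requires working with quasi-stable genus $9$ curves where the $2$-torsion data lives on a blown-up curve, and correctly identifying the limiting linear series of type $g^2_8$ together with the limit of the twisted bundle $L\otimes\eta$ across the unstable $\mathbb{P}^1$ is delicate — the half-integer coefficient is a sign that one has to track the stacky $\mathbb{Z}_2$-gerbe structure precisely, and an error there would shift the coefficient by a factor of $2$. I would cross-check the final class against the known slope bounds for effective divisors on $\rr_9$ and against the restriction to $\overline{\mathcal{M}}_9$-type test curves to make sure the coefficients are internally consistent.
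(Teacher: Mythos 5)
Your overall architecture (determinantal realization over a universal Brill--Noether space, GRR for the interior coefficients, separate boundary analysis) matches the paper's, but there are two genuine gaps. First, you never identify a map of vector bundles of the same rank whose degeneracy locus is $\cD_9$; the condition $h^0(C,L\otimes\eta)\neq 0$ is not itself of expected codimension as a cohomological condition, and the paper's construction goes through the rank $2$ syzygy bundle $M_L$ (kernel of $H^0(C,L)\otimes\OO_C\to L$): the map $\chi_{C,L,\eta}\colon H^1(C,M_L\otimes\eta)\to H^0(C,L)\otimes H^1(C,\eta)$ between spaces of dimension $24$ is non-injective exactly when $h^0(C,L\otimes\eta)\neq 0$, and this extends to a morphism of rank $24$ bundles over the partial compactification $\pr_9$ using torsion-free sheaves of degree $8$ with $h^0=3$. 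Without some such device the GRR computation you propose has nothing concrete to apply to. Second, and more seriously, knowing the class of the degeneracy locus $\widetilde{Z}$ exactly does \emph{not} by itself give the coefficients of $\delta_0'$ and $\delta_0^{\mathrm{ram}}$ in $[\overline{\cD}_9]$ exactly: a priori $\widetilde{Z}$ could contain entire boundary divisors as excess components, in which case $[\overline{\cD}_9]=\sigma_*[\widetilde{Z}]-(\text{excess boundary})$. Your assertion that $E$ avoids $\delta_0'$ and $\delta_0^{\mathrm{ram}}$ ``because those coefficients were computed exactly'' therefore begs the question. The paper closes this gap with a substantial geometric input (Theorem \ref{thm:nikulin}): a Lefschetz pencil of Prym curves on a general Nikulin surface meets $\Delta_0'$ and $\Delta_0^{\mathrm{ram}}$ but is disjoint from $\overline{\cD}_9$ (proved via Lazarsfeld--Mukai bundles and a Mukai-vector instability argument), which forces $\chi$ to be generically non-degenerate along those two boundary divisors and leaves only $\delta_0''$ undetermined.

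A smaller but real error: your claim that ``the symmetry between the two sheets forces equal coefficients'' for $\delta_0'$ and $\delta_0''$ is false. These divisors are not exchanged by any symmetry ($\Delta_0'$ is where $\nu^*\eta$ is non-trivial, $\Delta_0''$ where it is trivial), and in fact the paper cannot and does not determine the $\delta_0''$ coefficient --- the equality of the \emph{displayed} coefficients $-52$ is an artifact of the pushforwards $\sigma_*(\mathfrak{a}),\sigma_*(\mathfrak{b})$ being pulled back from $\mm_9$, and the true $\delta_0''$ coefficient may differ by the undetermined effective correction $\alpha\,\delta_0''$ absorbed into $E$. Your limit-linear-series/test-curve plan for the boundary is a legitimate alternative route in principle, but as written it neither carries out the computation nor supplies the non-containment statement that makes the $\delta_0'$ and $\delta_0^{\mathrm{ram}}$ coefficients exact.
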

Here we use the standard notation from \cite{FL} for the generators of $\mbox{Pic}(\rr_g)$ and refer to Section \ref{sec:classd} for details. In particular $\lambda$ is the Hodge class on $\rr_g$, whereas the other boundary classes appearing in the statement of Theorem \ref{thm:div_intro} are defined by the formula $\pi^*(\delta_0)=\delta_0'+\delta_0''+2\delta_0^{\mathrm{ram}}$, where $\delta_0\in \mbox{Pic}(\mm_g)$ is the class of the divisor of irreducible stable curves.

\vskip 4pt

The key point in our argument is that each component of the divisor $\dd_9$ is uniruled and, subject to a delicate transversality assumption which is established in \ref{subsec:trans}, we can exhibit a sweeping rational curve $R\subseteq \overline{\cD}_9$ satisfying both
\begin{equation}\label{eq:intnumbers}
R\cdot K_{\rr_9}<0 \ \  \mbox{ and } \ \ R\cdot \overline{\cD}_9>0.
\end{equation}
If one knew that $\dd_9$ was irreducible, the inequalities (\ref{eq:intnumbers}) would immediately imply that $K_{\rr_9}$ cannot be \emph{pseudo-effective}, that is, a limit of effective $\mathbb Q$-divisor classes and then using \cite{BDPP} (see also \cite[Proposition 5.1]{FV4}), it would follow that $\rr_9$ is uniruled. What in fact we \emph{can} show (Theorem \ref{thm:anycomponent}) is that there exists an irreducible component $\ddd$ of $\dd_9$ which is swept by rational curves $R\subseteq \ddd$ such that
$$R\cdot K_{\rr_9}<0 \  \mbox{  and } \ R\cdot \ddd \geq 0,$$ which, as explained, implies via \cite{BDPP} (or \cite{FV4} \emph{loc.cit.}) that $\rr_9$ is uniruled. We are able to carry this out, while stopping short of completely determining the class of $\ddd$.

\vskip 4pt

We now  discuss the construction of the pencil $R$ of Prym curves sweeping every component of the divisor $\overline{\cD}_9$. We start with a sufficiently general point $[C, \eta]\in \cD_9$. Therefore there exists a linear system $L\in W^2_8(C)$ such that $H^0(C, L\otimes \eta)\neq 0$, which by Riemann-Roch also implies that $H^0(C, \overline{L})\neq 0$, where $\overline{L}:=\omega_C\otimes L^{\vee}\in W^2_8(C)$ is the residual linear system. We write
\begin{equation}\label{eq:tgpoints}
\overline{L}\otimes \eta\cong \OO_C(y_1+ \cdots + y_8),
\end{equation}
where $y_1, \ldots, y_8\in C$. We may assume that the points $y_1, \ldots, y_8$ are mutually distinct and that  $L$ is globally generated and induces a birational map
$\varphi_L\colon C\rightarrow \Gamma\subseteq \PP^2$, where $\Gamma$ is a nodal plane octic (this last claim follows from e.g. \cite[Theorem 2]{EH2}).  We denote by $o_1, \ldots, o_{12}\in \PP^2$ the nodes of $\Gamma$ and set $\{x_i, x_i'\}=\varphi_L^{-1}(o_i)$ for $i=1, \ldots, 12$.  Because of the generality of $[C, \eta]$ we may assume that the sets
$\{x_1, x_1', \ldots, x_{12}, x_{12}'\}$ and $\{y_1, \ldots, y_8\}$ are disjoint.  By adjunction, we have
$$\overline{L}\cong \OO_C(4)\Bigl(-\sum_{i=1}^{12} (x_i+x_i')\Bigr),$$
where $L=\OO_C(1)$, that is, the linear system $|\overline{L}|$ is cut out on $\Gamma$ by a quartic plane curve passing through the nodes $o_1, \ldots, o_{12}$.
From (\ref{eq:tgpoints}), we obtain that
\begin{equation}\label{eq:tg2}
\OO_C(2y_1+\cdots+2y_8)\cong \overline{L}^{\otimes 2}\cong \OO_C(8)\Bigl(-2\sum_{i=1}^{12} (x_i+x_i')\Bigr),
\end{equation}
which amounts to saying that there exists an octic curve $\Gamma'\subseteq \PP^2$ nodal at the points $o_1, \ldots, o_{12}$ and tangent to $\Gamma$ at the points $y_1, \ldots, y_8$. Quite remarkably, the curve $\Gamma'$ has the same numerical characteristics as $\Gamma$ and we can consider the pencil $\{\Gamma_t\}_{t\in \PP^1}$ of octics spanned by $\Gamma$ and $\Gamma'$. Each curve in this pencil has singularities at the points $o_1, \ldots, o_{12}$ and passes through the points $y_1, \ldots, y_8$, therefore its normalization is a curve of genus $9$.  Because of condition  (\ref{eq:tg2}), we can lift this pencil to a pencil $R$ of Prym curves in $\rr_9$, by taking
\begin{equation}\label{eq:pencil1}
R:=\Bigl\{[C_t, \ \eta_t=\omega_{C_t}(-1)(-y_1-\cdots-y_8)\bigr]\Bigr\}_{t\in \PP^1}\subseteq \rr_9,
\end{equation}
where $\varphi_t\colon C_t\rightarrow \Gamma_t\subseteq \PP^2$ is the normalization map.

\vskip 4pt

We will establish in Section \ref{sec:uniruledd} that we can choose a Prym curve $[C, \eta]\in \cD_9$ filling up a codimension one subvariety of $\rr_9$, that is, an irreducible component $\ddd$ of $\overline{\cD}_9$, such that every curve $C_t$ in the pencil $R$ passing through the point $[C, \eta]$ is \emph{irreducible} and \emph{nodal}. We then compute the intersection of $R$ with the generators of $\mbox{Pic}(\rr_9)$. First observe that because $\Gamma$ and $\Gamma'$ share a common tangent line at each of the points $y_1,\ldots, y_8$, there will be precisely one curve in the pencil $R$
that has a nodal singularity at $y_i$. The corresponding Prym curve lies in the boundary divisor $\Delta_0^{\mathrm{ram}}$ (which can be viewed as the ramification divisor of the finite map $\pi\colon \rr_9\rightarrow \mm_9$). Moreover, these are the only points of intersection of $R$ and
$\Delta_0^{\mathrm{ram}}$ and at each of these points the intersection is transverse, which shows that $R\cdot \delta_0^{\mathrm{ram}}=8$. Calculation to be performed in Theorem \ref{thm:int_numbers3}  then imply that
\begin{equation}\label{eq:intnumbers2}
R\cdot \lambda=9, \ R\cdot \delta_0'=47, \ R\cdot \delta_0''=0 \ \mbox{ and } \ R\cdot \delta_i=R\cdot \delta_{9-i}=R\cdot \delta_{i:9-i}=0, \mbox{ for } i=1, \ldots, 4,
\end{equation}
which leads to the following intersection number with the canonical class
\begin{equation}\label{eq:canonical}
R\cdot K_{\rr_9}=R\cdot \Bigl(13\lambda-2(\delta_0'+\delta_0'')-3\delta_0^{\mathrm{ram}}-\cdots \Bigr)=13\cdot 9-2\cdot 47-3\cdot 8=-1.
\end{equation}
The rational curve $R$ is a moving curve for the divisor $\ddd$. Since in Section \ref{sec:uniruledd} (Theorem \ref{thm:anycomponent}) we also show that any curve $R\subseteq \rr_9$ having the intersection numbers given by (\ref{eq:intnumbers2}) has to intersect non-negatively any divisor disjoint from a generic pencil of Prym curves on a general Nikulin surface, we conclude that $R\cdot \ddd \geq 0$. This implies that $K_{\rr_9}$ is not pseudo-effective, thus proving Theorem \ref{thm:main}.

\vskip 4pt

{\small{{\bf Acknowledgments:} We have greatly benefitted from discussions with Margherita Lelli-Chiesa related to this circle of ideas.

Farkas supported by the DFG Grant \emph{Syzygien und Moduli} and by the ERC Advanced Grant SYZYGY. This project has received funding from the European Research Council (ERC) under the EU Horizon 2020  program (grant agreement No. 834172).}} This work was started when Verra was an Invited Professor of the Berlin Mathematical School.

\section{The Brill-Noether divisor on $\rr_9$}\label{sec:classd}
In this Section, after recalling basic facts about the moduli space $\rr_g$, we  compute the class of the divisor $\overline{\cD}_9$, then establish various facts about the class of every irreducible component of $\dd_9$, which will prove to be essential in the proof of Theorem \ref{thm:main}. It turns out that the formula for the class $[\dd_9]$ is stated without proof as part of \cite[Theorem 0.4]{FL}. Due to the importance  this class plays in the course of our argument, and because the formula in \emph{loc.cit.} contains several missprints, we shall present all details of the calculation of $[\dd_9]$.

\vskip 4pt

Recall \cites{B, BCF, FL} that $\rr_g$ denotes the moduli stack of stable Prym curves of genus $g$, that is, consisting of triples $[X, \eta, \beta]$, where $X$ is a quasi-stable genus $g$ curve, $\eta$ is a locally free sheaf of total degree $0$ on $X$ such that $\eta_{|E}\cong \OO_E(1)$ for every smooth rational component $E\subseteq X$ with $|E\cap \overline{X\setminus E}|=2$ (such a component being called \emph{exceptional}) and $\beta\colon \eta^{\otimes 2}\rightarrow \OO_X$ is a sheaf morphism that is non-zero along each non-exceptional component of $X$. There exists a finite branch map $\pi\colon \rr_g\rightarrow \mm_g$ assigning to a triple $[X, \eta, \beta]$ as above the stabilization of $X$, obtained from $X$ by contracting all of its exceptional components.

\vskip 4pt

The Picard group of $\rr_g$ is freely generated by the Hodge class $\lambda$ and the boundary classes $\delta_0', \delta_0''$, $\delta_0^{\mathrm{ram}}$ and $\delta_i, \delta_{g-i}, \delta_{i:g-i}$, where $i=1, \ldots, \lfloor \frac{g}{2}\rfloor$. Denoting by $\delta_0\in \mbox{Pic}(\mm_g)$ the class of the locus of irreducible stable curves and by $\delta_i\in \mbox{Pic}(\mm_g)$ for $i=1, \ldots, \bigl \lfloor \frac{g}{2}\bigr\rfloor$ the class of the closure of the locus of the union of two smooth curves of genus $i$ and $g-i$ meeting at one point, one has the following relations:
\begin{equation}
\pi^*(\delta_0)=\delta_0^{'}+\delta_0^{''}+2\delta_{0}^{\mathrm{ram}} \   \    \mbox{  and } \  \   \pi^*(\delta_i)=\delta_i+\delta_{g-i}+\delta_{i:g-i}, \mbox{ for } i\geq 1.
\end{equation}
We now recall the meaning of the  classes  $\delta_0^{'}:=[\Delta_0^{'}], \ \delta_0^{''}:=[\Delta_0^{''}]$ respectively
 $\delta_0^{\mathrm{ram}}:=[\Delta_0^{\mathrm{ram}}]$, while referring to \cite{FL} for details.  If we  fix a general point $[C_{xy}]\in \Delta_0$ induced by a $2$-pointed curve $[C, x, y]\in \cM_{g-1, 2}$ and the normalization map $\nu\colon C\rightarrow C_{xy}$, where $\nu(x)=\nu(y)$, a general point of the irreducible divisor $\Delta_0'$ (respectively of $\Delta_0''$) corresponds to a stable Prym curve $[C_{xy}, \eta]$, where $\eta\in \mathrm{Pic}^0(C_{xy})[2]$ and $\nu^*(\eta)\in \mathrm{Pic}^0(C)$ is non-trivial
(respectively trivial). A general point of $\Delta_{0}^{\mathrm{ram}}$ is of the form $[X, \eta]$, where $X:=C\cup_{\{x, y\}} \PP^1$ is a quasi-stable curve  and  $\eta\in \mathrm{Pic}^0(X)$ satisfies $\eta_{\PP^1}\cong \OO_{\PP^1}(1)$ and $\eta_C^{\otimes 2}\cong \OO_C(-x-y)$. Note that $\Delta_0^{\mathrm{ram}}$ corresponds generically to $1$-nodal irreducible curves where the Prym structure is not free at the node of the underlying curve.

Applying the Hurwitz formula to the branch cover $\pi\colon \rr_g\rightarrow \mm_g$, we also have
\begin{equation}\label{eq:canonical1}
K_{\rr_g}=13\lambda-2\bigl(\delta_0'+\delta_0''\bigr)-3\delta_0^{\mathrm{ram}}-3\bigl(\delta_1+\delta_{g-1}+\delta_{1:g-1}\bigr)-2\sum_{i=2}^{\lfloor \frac{g}{2}\rfloor}\bigl(\delta_i+\delta_{g-i}+\delta_{i:g-i}\bigr).
\end{equation}

\subsection{The divisor $\cD_9$.} We now specialize to the case of $\rr_9$. From the Brill-Noether Theorem \cite{ACGH} it follows that for a general curve $[C]\in \cM_9$ the Brill-Noether variety $G^2_8(C)$ is finite and consists of $42$ distinct points. We denote by $\mathcal{G}^2_8\rightarrow
\cM_9$ the Deligne-Mumford stack of linear systems classifying pairs $[C, \ell]$, where $[C]\in \cM_9$
and $\ell=(L, V)\in G^2_8(C)$.

\vskip 4pt

We introduce the \emph{Brill-Noether divisor} on $\cR_9$
$$\mathcal{D}_{9}:=\Bigl\{[C, \eta]\in \cR_9: \exists L\in W^2_{8}(C)\
\mbox{ such that } \ h^0(C, L\otimes \eta)\geq 1\Bigr\}.$$
It follows from \cite[Theorem 2.3]{FL}  that $\mathcal{D}_{9}$ is an effective divisor on $\cR_9$.

In order to realize $\cD_9$ as the degeneracy locus of two vector bundles of the same rank over $\cR_9$, for a pair $[C,L]\in \mathcal{G}^2_8$ such that $L$ is globally generated and $h^0(C,L)=3$, let $M_L$ be the rank $2$ syzygy bundle defined by the exact sequence
\begin{equation}
0\longrightarrow M_L \longrightarrow H^0(C,L)\otimes \OO_C \longrightarrow L \longrightarrow 0.
\end{equation}
Tensoring the resulting exact sequence with $\eta$, for a triple $[C, L, \eta]\in  \mathcal{RG}^2_8:=\mathcal{G}^2_8\times_{\cM_9} \cR_9$  the condition $H^0(C, L\otimes \eta)\neq 0$ is then equivalent to the non-injectivity of the following map

\begin{equation}\label{eq:detstr}
\chi_{C,L,\eta} \colon  H^1\bigl(C, M_L\otimes \eta\bigr)\longrightarrow H^0(C, L\otimes \eta)\otimes H^1(C, \eta).
\end{equation}

Since  $H^0\bigl(C, M_L\otimes \eta)=0$, by Riemann-Roch $h^1\bigl(C, M_L\otimes \eta)=-\mbox{deg}(M_L)+2\cdot(g-1)=\mbox{deg}(L)+2\cdot 8=24$, whereas clearly $\mbox{dim } H^0(C,L)\otimes H^1(C,\eta)=h^0(C,L)\cdot h^0(C, \omega_C\otimes \eta)=3\cdot 8=24$, that is, $\chi_{C,L,\eta}$ is a map between vector spaces of the same dimension. We globalize the description (\ref{eq:detstr}) to a morphism of vector bundles of the same rank over the moduli stack of Prym curves.

\vskip 4pt

Let $\cM_9^{\circ}\subseteq \cM_9$ be the open substack
of smooth curves $C$ of genus $9$ such that $G_{7}^2(C)=
\emptyset$.  It is easy to see that
$\mbox{codim}(\cM_9-\cM_g^{\circ}, \cM_9)\geq 2$. In particular, $h^0(C, L)=3$, for every $[C]\in \cM_9^{\circ}$ and $L\in W^2_8(C)$. We denote by
$\Delta_0^{\circ}\subseteq \Delta_0\subset \mm_9$ the locus of nodal curves
$[C_{xy}=C/x\sim y]$, where $C$ is a smooth curve of genus $8$ such that $G^2_7(C)=\emptyset$ and $x, y\in C$ (note that we allow for the possibility $x=y$, in which case we attach an elliptic tail to $C$ at the point $x$).  Set
$$\pm_9:=\cM_9^{\circ}\cup \Delta_0^{\circ}\subseteq \mm_9 \ \  \mbox{ and } \ \  \pr_9:=\pi^{-1}(\widetilde{\cM}_9)\subseteq \rr_9.$$
Observe that $\mbox{Pic}(\pr_9)_{\mathbb Q}$ is freely generated by the classes $\lambda, \delta_0', \delta_0''$ and $\delta_0^{\mathrm{ram}}$.
Let
$$\pg^2_8\rightarrow \widetilde{\cM}_9$$
be the stack classifying pairs $[C, L]$, where $[C]\in \widetilde{\cM}_9$  and $L$ is a torsion free sheaf on $C$ having degree $8$ and with $h^0(C,L)=3$. Note that in the case of an element $[C_{xy}, L]$, where $C_{xy}$ is a $1$-nodal curve, the assumption $G^2_7(C)=\emptyset$ guarantees that $L$ is necessarily locally free, else $L=\nu_*(A)$, where $A\in G^2_7(C)$, where we recall that $\nu\colon C\rightarrow C_{xy}$ denotes the normalization map. For further details, we refer to \cite[page 770]{FL}.

We finally introduce the stack of linear series over Prym curves
$$\sigma\colon \mathfrak \prg ^2_8:=\pr_9 \times_{\widetilde{\cM}_9} \pg^2_8\rightarrow \widetilde{\cR}_9$$
and  consider the universal Prym curve of genus $9$ over it
$$f\colon \mathcal{C} \rightarrow \prg ^2_8.$$

Note that, if $t=\bigl[C\cup_{\{x,y\}}\PP^1, \eta, L]\in \sigma^{-1}(\Delta_0^{\mathrm{ram}})$, where $C$ is a smooth curve of genus $8$, then $f^{-1}(t)=C\cup_{\{x,y\}} \PP^1$, cf. \cite[1.1]{FL}.

At the level of $\mathcal{C}$ we have a universal \emph{Prym} line bundle
$\mathcal{P}$  and a \emph{Poincar\'e} line bundle which are characterized by the
property $\mathcal{P}_{|f^{-1}[X, \eta, \beta, L]}=\eta\in \mbox{Pic}^0(X)$ and $\mathcal{L}_{|f^{-1}[X, \eta, \beta, L]}=L\in \mbox{Pic}^8(X),$
for each point $[X, \eta, \beta, L]\in \prg^2_8$.

Following \cites{FL, F1}, we introduce the
codimension $1$ following tautological classes in $\mbox{Pic}\bigl(\prg_8^2\bigr)$:

\begin{equation}\label{tautological}
\mathfrak{a}:=f_*\bigl(c_1(\L)^2\bigr) \ \  \mbox{ and } \  \ \mathfrak{b}:=f_*\bigl(c_1(\L)\cdot
c_1(\omega_{f})\bigr).
\end{equation}

We shall also need the tautological rank $3$ vector bundle on $\prg_8^2$
$$\mathcal{V}:=f_*(\L).$$
The fact that $\mathcal{V}$ is locally free follows from Grauert's theorem, since as we already explained, $h^0(C,L)=3$, for every $[C,L]\in \pg^2_8$.

We record the following formulas describing the push-forward of these tautological classes under the generically finite morphism $\sigma$, see \cite{F1}:
\begin{equation}\label{eq:pushforward}
\sigma_*(\mathfrak{a})=-564\lambda+83\bigl(\delta_0'+\delta_0''+2\delta_{\mathrm{ram}}\bigr), \ \mbox{ and } \ \  \sigma_*(\mathfrak{b})=252\lambda-21\bigl(\delta_0'+\delta_0''+2\delta_0^{\mathrm{ram}}\bigr).
\end{equation}
The class $\sigma_*\bigl(c_1(\cV)\bigr)$ can also be determined, see \cite{FL}, but it will not be used in what follows.

\vskip 4pt

We consider the global syzygy bundle on $\cC$ defined by the exact sequence
$$
0\longrightarrow \cM\longrightarrow f^*(\cV)\longrightarrow \L\longrightarrow 0.
$$

From this sequence,  we obtain the following formulas which we record:
\begin{equation}\label{eq:chernclassesM}
c_1(\cM)=f^*c_1(\cV)-c_1(\L) \ \ \mbox{ and } \ \ c_2(\cM)=f^*c_2(\cM)+c_1^2(\L)-f^*c_1(\cV)\cdot c_1(\L).
\end{equation}

We then introduce the following vector bundles over $\prg_8^2$:
$$\cA:=R^1f_*\bigl(\cM\otimes \P\bigr) \ \ \mbox{ and }  \ \ \cB:=f^*\bigl(\cV \otimes R^1f_*(\P)\bigr),$$
having fibres $\cA\bigl([X,\eta,\beta,L]\bigr)=H^1(X,M_L\otimes \eta)$ and $\cB\bigl([X, \eta, \beta, L]\bigr)=H^0(X,L)\otimes H^1(X, \eta)$.
Note that both  $\cA$ and $\cB$ are locally free sheaves of rank $24$ and there exists a natural morphism

\begin{equation}\label{eq:sheaf_morphism}
\chi\colon \cA\rightarrow \cB,
\end{equation}
which over a point $[C,\eta,L]$ corresponding to a smooth curve $C$ globalizes the maps (\ref{eq:detstr}). Let $\widetilde{Z}$ be the degeneracy locus of the morphism $\chi$ and $Z:=\tilde{Z}\cap (\pi\circ \sigma)^{-1}\bigl(\cM_9^{\circ})$. Therefore, $\sigma(Z)$  coincides over $\cR_9^{\circ}=\pi^{-1}(\cM_9^{\circ})$ with the divisor $\cD_9$.

\begin{theorem}\label{thm:classdege}
The class of the degeneracy locus $\widetilde{Z}$ of the morphism $\chi\colon \cA\rightarrow \cB$ equals
$$[\widetilde{Z}]=c_1(\cB-\cA)=-\lambda-\frac{\mathfrak{a}}{2}+\frac{\mathfrak{b}}{2}+\frac{1}{4}\sigma^*(\delta_0^{\mathrm{ram}}).$$
\end{theorem}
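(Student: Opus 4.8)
The plan is to compute the class of the degeneracy locus $\widetilde{Z}$ via the Thom--Porteous formula. Since $\chi\colon \cA\to\cB$ is a morphism of vector bundles of the same rank $24$, its degeneracy locus (where $\chi$ drops rank) is expected to have codimension one, and Thom--Porteous gives $[\widetilde{Z}]=c_1(\cB)-c_1(\cA)=c_1(\cB-\cA)$ in the Chow group, provided the locus has the expected codimension. So the first step is to justify that $\widetilde{Z}$ is indeed a divisor (equivalently that $\chi$ is generically an isomorphism), which follows because over a general $[C,\eta,L]$ the map $\chi_{C,L,\eta}$ of \eqref{eq:detstr} is injective — this is essentially the content of the Brill-Noether computation showing $\cD_9$ is a genuine divisor, already invoked via \cite[Theorem 2.3]{FL}.

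The main computational task is then to evaluate $c_1(\cB-\cA)$ in terms of the tautological classes. First I would handle $\cB=f_*(\cV)\otimes R^1f_*(\P)=\cV\otimes R^1f_*(\P)$; here $\cV=f_*(\L)$ has rank $3$ and $R^1f_*(\P)$ has rank $8$ (since $h^1(X,\eta)=h^0(X,\omega_X\otimes\eta)=8$). One gets $c_1(\cB)=8\,c_1(\cV)+3\,c_1(R^1f_*\P)$. For $\cA=R^1f_*(\cM\otimes\P)$, since $f_*(\cM\otimes\P)=0$ fibrewise (as $H^0(X,M_L\otimes\eta)=0$), Grothendieck--Riemann--Roch applied to $f$ computes $-c_1(\cA)=c_1\!\bigl(Rf_*(\cM\otimes\P)\bigr)$ as the degree-one part of $f_*\!\bigl(\mathrm{ch}(\cM)\cdot\mathrm{ch}(\P)\cdot\mathrm{td}(\omega_f^\vee)\bigr)$. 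Expanding using $\mathrm{ch}(\cM)=2-c_1(\cM)+\tfrac12(c_1(\cM)^2-2c_2(\cM))+\cdots$ and $\mathrm{ch}(\P)=1+c_1(\P)+\tfrac12 c_1(\P)^2+\cdots$ together with $\mathrm{td}(\omega_f^\vee)=1-\tfrac12 c_1(\omega_f)+\cdots$, and then substituting the relations \eqref{eq:chernclassesM} for $c_1(\cM),c_2(\cM)$ and the definitions \eqref{tautological} of $\mathfrak a=f_*(c_1(\L)^2)$ and $\mathfrak b=f_*(c_1(\L)c_1(\omega_f))$, one collects the various push-forwards. I would similarly apply GRR to $f$ with $\P$ alone to express $c_1(R^1f_*\P)$ (note $f_*\P=0$ fibrewise since $\eta$ is nontrivial of degree $0$), and to $\L$ alone to relate $c_1(\cV)$ to $\mathfrak a,\mathfrak b,\lambda$; the terms involving $c_1(\P)^2$, $c_1(\P)\cdot c_1(\L)$, $c_1(\P)\cdot c_1(\omega_f)$ must be dealt with, and this is where the structure of the Prym sheaf $\P$ enters.

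The delicate point — and the step I expect to be the main obstacle — is the correct bookkeeping of the terms supported over $\Delta_0^{\mathrm{ram}}$. Over this boundary divisor the fibre of $f$ acquires the exceptional $\PP^1$ on which $\eta$ restricts to $\OO_{\PP^1}(1)$ rather than a flat degree-zero line bundle, and $\P^2$ is not trivial (the morphism $\beta$ degenerates); equivalently $c_1(\P)^2$ pushes forward to a nonzero multiple of $\delta_0^{\mathrm{ram}}$ rather than to $-\sigma^*\delta_0^{\mathrm{ram}}/4$ or $0$ naively expected from $\P^{\otimes 2}\cong\OO$. Getting the coefficient $\tfrac14$ of $\sigma^*(\delta_0^{\mathrm{ram}})$ right requires analysing $\P$ near $\Delta_0^{\mathrm{ram}}$ as in \cite{FL, F1}: the relation $\P^{\otimes 2}\cong\OO_{\cC}(-\mathcal{E})$ for the exceptional divisor $\mathcal{E}\subset\cC$, so $c_1(\P)=-\tfrac12[\mathcal{E}]$ modulo classes pulled back from the base, and $f_*([\mathcal{E}]^2)=-\sigma^*(\delta_0^{\mathrm{ram}})$; squaring and pushing forward produces the $\tfrac14\sigma^*(\delta_0^{\mathrm{ram}})$ contribution. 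Once this local contribution is isolated and the remaining intrinsic terms are assembled, all the pieces combine — after substituting \eqref{eq:pushforward} is \emph{not} needed here since we want the answer in terms of $\mathfrak a,\mathfrak b,\lambda$, not their $\sigma$-pushforwards — to give $[\widetilde{Z}]=-\lambda-\tfrac{\mathfrak a}{2}+\tfrac{\mathfrak b}{2}+\tfrac14\sigma^*(\delta_0^{\mathrm{ram}})$, which is the claimed formula.
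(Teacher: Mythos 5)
Your proposal follows essentially the same route as the paper: Thom--Porteous reduces the statement to $c_1(\cB)-c_1(\cA)$, the class of $\cB=\cV\otimes R^1f_*(\P)$ is computed by the tensor-product formula together with the known class of $R^1f_*(\P)$, and $c_1(\cA)$ is computed by Grothendieck--Riemann--Roch using the push-forward formulas for $c_1^2(\P)$, $c_1(\omega_f)\cdot c_1(\P)$, etc.\ from \cite[Propositions 1.6 and 1.7]{FL}. Only your parenthetical heuristic for the $\Delta_0^{\mathrm{ram}}$-contribution is slightly off --- the paper takes $f_*\bigl(c_1^2(\P)\bigr)=-\tfrac{1}{2}\delta_0^{\mathrm{ram}}$ as input, and the final coefficient $\tfrac{1}{4}$ arises as the net $\tfrac{3}{4}-\tfrac{1}{2}$ of the $\cB$- and $\cA$-contributions rather than from $f_*\bigl(c_1(\P)^2\bigr)$ alone --- but since you defer to the cited local analysis of $\P$ near $\Delta_0^{\mathrm{ram}}$ for the precise constants, this does not affect the argument.
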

\begin{proof} Calculating the class of $\cB$ is straightforward. Using \cite[Proposition 1.7]{FL} we have that $c_1\bigl(R^1f_*(\P)\bigr)=-c_1\bigl(f_*(\omega_f\otimes \P^{\vee})\bigr)=-\lambda+\frac{1}{4} \sigma^*(\delta_0^{\mathrm{ram}})$, therefore
\begin{equation}\label{eq:classB}
c_1(\cB)=8f^*c_1(\cV)+3c_1\bigl(R^1f_*(\P)\bigr)=8f^*\bigl(c_1(\cV)\bigr)-3\lambda+\frac{3}{4}\sigma^*(\delta_0^{\mathrm{ram}}).
\end{equation}
In order to calculate $c_1(\cA)$ we apply Grothendieck-Riemann-Roch to the universal curve $f\colon \cC\rightarrow \prg^2_8$ and to the vector bundle $\cM\otimes \P$ and we write:
\begin{align*}
-c_1(\cA)=-c_1 \bigl(R^1f_*(\cM\otimes \P)\bigr)= f_*\Bigl[\Bigl(2+c_1(\cM\otimes \P)+\frac{c_1^2(\cM\otimes \P)-2c_2(\cM\otimes \P)}{2}\Bigr) \cdot \\ \Bigl(1-\frac{c_1(\omega_{f})}{2}+\frac{c_1^2(\omega_{f})
+[\mathrm{Sing}(f)]}{12}\Bigr)\Bigr]_2,
\end{align*}
where
$\mathrm{Sing}(f)\subseteq \cC$ denotes the codimension
$2$ singular locus (that is, the locus of nodes) of the universal curve $f$. Clearly
$f_*[\mathrm{Sing}(f)]=\sigma^*\bigl(\Delta_0'+\Delta_0''+2\Delta_0^{\mathrm{ram}}\bigr)$. To estimate the degree $2$ terms appearing in the right hand side of this formula, we use Mumford's formula \cite{HM} $f_*\bigl(c_1^2(\omega_f)\bigr)=12\lambda-\sigma^*(\delta_0'+\delta_0''+2\delta_0^{\mathrm{ram}})$, coupled with the formulas (\ref{eq:chernclassesM}), as well as with the following formulas, see \cite[Proposition 1.6]{FL}:

\begin{align*}
f_*\bigl(c_1^2(\P)\bigr)=-\frac{\delta_0^{\mathrm{ram}}}{2}, \ \ \ f_*(c_1(\omega_f)\cdot c_1(\P))=0, \ \ \ f_*\bigl(c_1(\omega_f)\cdot f^*c_1(\cV)\bigr)=16c_1(\cV),\\
f_*\bigl(c_1(\L)\cdot c_1(\P))=f_*\bigl(f^*(c_1(\cV)\cdot c_1(\P))=0, \  \ \ f_*\bigl(c_1(\L)\cdot f^*c_1(\cV)\bigr)=8c_1(\cV)
.
\end{align*}

Making also use of the formula $c_2(\cM\otimes \P)=c_2(\cM)+c_1^2(\P)+c_1(\cM)\cdot c_1(\P)$, we conclude that
\begin{equation}\label{eq:classA}
-c_1(\cA)=2\lambda+\frac{\mathfrak{b}}{2}-\frac{\mathfrak{a}}{2}-8c_1(\cV)-\frac{1}{2}\sigma^*(\delta_0^{\mathrm{ram}}).
\end{equation}
Combining (\ref{eq:classA}) and (\ref{eq:classB}), we find $[\widetilde{Z}]=c_1(\cB)-c_1(\cA)=-\lambda+\frac{\mathfrak{b}}{2}-\frac{\mathfrak{a}}{2}+\frac{1}{4}\sigma^*(\delta_0^{\mathrm{ram}})$, which finishes the proof.
\end{proof}

\section{Nikulin surfaces and the divisor $\dd_9$}\label{sec:nikulin}

In this section we show that a general pencil of Prym curves lying on a Nikulin surface is disjoint from the divisor $\dd_9$. We begin by recalling basic facts about the connection between Nikulin surfaces and Prym curves, our main references being \cites{FV1,vGS}.

\vskip 3pt

A \emph{Nikulin  surface} is a $K3$ surface $S$ endowed with a symplectic automorphism, or equivalently with a non-trivial double cover
$$f\colon \tilde S \rightarrow S $$
having a branch divisor
$N:=N_1 + \cdots + N_8$
consisting of $8$ disjoint smooth rational curves $N_i\subseteq S$.  The class $[N]$ is divisible by $2$ and we set $\mathfrak{e}:=\frac{1}{2}\bigl([N_1]+\cdots+[N_8]\bigr)\in \mathrm{Pic}(S)$   and define the \emph{Nikulin lattice} to be the rank $8$  lattice $\mathfrak{N}\subseteq \mbox{Pic}(S)$  generated by  $[N_1], \ldots, [N_8]$ and by $\mathfrak{e}$. A \emph{polarized}  Nikulin surface of genus $g$ is a Nikulin surface $f\colon \tilde{S} \rightarrow  S$ as above, together with a smooth curve $C\subset S$ of genus $g$ such that $C\cdot N_i=0$, for $i=1, \ldots,8$. There is an irreducible $11$-dimensional moduli space $\mathcal{F}_g^{\mathfrak{N}}$ of polarized Nikulin surfaces of genus $g$ and for a general such surface one has $\mbox{Pic}(S)\cong \mathbb Z\cdot [C]\oplus \mathfrak{N}$.
If $\tilde C := f^{-1}(C)$, then
$f_C:=f_{|\tilde C}\colon \tilde C \rightarrow C
$ is an \'etale double covering and
$\mathfrak{e}_C:=\mathcal O_C(\mathfrak{e})\in \mathrm{Pic}^0(C)$ is the non trivial $2$-torsion element defining the covering $f_C$.

\vskip 4pt

We define the \emph{Nikulin pencil} in $\rr_g$ to be the pencil $\Xi_g\subseteq \rr_g$ consisting of Prym curves $\bigl\{[C_t, \mathfrak{e}_{C_t}]\bigr\}_{t\in \PP^1}$  induced by a Lefschetz pencil $\bigl\{C_t\bigr\}_{t\in \PP^1}$ on a general polarized Nikulin surface $S$. The following formulas hold, see \cite[Proposition 1.4]{FV1}.
\begin{equation}\label{eq:Nikulin_pencil}
\Xi_g\cdot \lambda=g+1, \ \ \Xi_g\cdot \delta_0'=6g+2, \ \  \Xi_g\cdot \delta_0''=0\  \  \mbox{ and }\ \  \Xi_g\cdot \delta_0^{\mathrm{ram}}=8.
\end{equation}

All elements of $\Xi_g$ are irreducible curves, therefore the intersection of $\Xi_g$ with the other boundary divisors in $\rr_g$ are equal to zero.

\vskip 3pt

\subsection{Moduli of vector bundles on Nikulin surfaces}
Given a smooth $K3$ surface $S$,  the \emph{Mukai pairing} \cite{HL, Mu} on $H^{\bullet}(S)$ is defined by
$$(v_0, v_1, v_2)\cdot (w_0. w_1, w_2):=v_1\cdot w_1-v_2\cdot w_0-v_0\cdot w_2\in H^4(S, \mathbb Z)\cong \mathbb Z.$$
For a sheaf $F$ on $S$, let $v(F):=\Bigl(\rk(F), c_1(F), \chi(F)-\rk(F)\Bigr)$ be its Mukai vector.
For a polarization $H\in \mbox{Pic}(S)$, let $M_H(v)$ be the moduli space of $S$-equivalence classes of (Gieseker) $H$-semistable sheaves $F$ on $S$ with Mukai vector $v(F)=v$. Let $M_H^s(v)$ the open subset of $M_H(v)$ corresponding to $H$-stable sheaves. Then it is known that $M_H^s(v)$ is pure dimensional and $\mbox{dim } M_H^s(v)=v^2+2$. In particular, if $v(F)^2<-2$, then $F$ is not stable.

\vskip 3pt

Specializing now to the case when $S$ is a Nikulin surface, we now show that  the pencil $\Xi_9$ is disjoint from the divisor $\dd_9$. The following result uses essential input from M. Lelli-Chiesa:

\begin{theorem}\label{thm:nikulin}
Let $f\colon \tilde{S}\rightarrow S$ be a polarized Nikulin surface of genus $9$ with $\mathrm{Pic}(S)\cong \mathbb Z\cdot C \oplus \mathfrak{N}$ and let $\Xi_9\subseteq \rr_9$ be an induced Lefschetz pencil of Prym curves on $S$. Then $\Xi_9\cap \dd_9=\emptyset$.
\end{theorem}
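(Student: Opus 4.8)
The plan is to show that if some $[C_t,\mathfrak e_{C_t}]\in\Xi_9$ lay in $\dd_9$, then the underlying Nikulin surface $S$ would carry a rank two Mukai bundle whose existence is incompatible with the assumption $\mathrm{Pic}(S)\cong\mathbb Z\cdot C\oplus\mathfrak N$, via the dimension/non-existence count for stable sheaves recalled just above (if $v(F)^2<-2$ then $F$ is not stable). Concretely, suppose $[C_t,\mathfrak e_{C_t}]\in\dd_9$, so there is $L\in W^2_8(C_t)$ with $H^0(C_t,L\otimes\mathfrak e_{C_t})\neq 0$. Set $\xi:=L\otimes\mathfrak e_{C_t}\in\mathrm{Pic}^8(C_t)$; then $\xi$ is an effective divisor class with $h^0(C_t,\xi)\geq 1$, and $\xi\otimes\mathfrak e_{C_t}=L$ has $h^0=3$, i.e. $h^0(C_t,\xi\otimes\mathfrak e_{C_t})\geq 3$. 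Both $\xi$ and $\xi\otimes\mathfrak e_{C_t}$ are line bundles of degree $8$ on a genus $9$ curve, and the key point is that $\xi$ is genuinely special in a way that, by a Lazarsfeld--Mukai type construction, produces a vector bundle on $S$ with too-negative square.

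The main step is the Lazarsfeld--Mukai construction. First I would replace $L$ (or $\xi$, whichever is base-point free) by its globally generated version and consider the evaluation sequence on $S$: take the rank two Lazarsfeld--Mukai bundle $E_{C_t,A}$ obtained as the dual of the kernel of $H^0(C_t,A)\otimes\mathcal O_S\to \iota_*A$ for a suitable $A$ of degree $8$ with $h^0=3$ on $C_t\subset S$, or — since we want to exploit the \emph{pair} $(\xi,\xi\otimes\mathfrak e_{C_t})$ — the analogous construction using the two line bundles simultaneously. One computes the Mukai vector $v(E)=(2,[C_t],\chi)$ and its square $v(E)^2$ from $[C_t]^2=2g-2=16$ and $\chi(E)$, which is controlled by the $h^0$'s of $A$ and its residual. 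The arithmetic will give $v(E)^2$ strictly less than what is available in the rank two Picard lattice $\mathbb Z\cdot C\oplus\mathfrak N$: because $\mathfrak e$ and the $N_i$ are the only extra classes and $C\cdot N_i=0$, $C\cdot\mathfrak e=0$, the intersection form on $\langle C,\mathfrak N\rangle$ is very restrictive, and no class $c_1$ with the required numerical properties (in particular the required existence of two distinct degree $8$ sub-line-bundle structures related by the $2$-torsion $\mathfrak e_{C_t}$) can be realized. So either $E$ is unstable, and then splitting $E$ into line subbundles forces a class in $\mathrm{Pic}(S)$ not present in $\mathbb Z\cdot C\oplus\mathfrak N$ (a contradiction), or $E$ is stable with $v(E)^2\geq -2$, which contradicts the numerical computation.

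The delicate part — and where the ``essential input from M. Lelli-Chiesa'' enters — is precisely the lattice-theoretic bookkeeping: one must track \emph{all} line subbundles / quotients of the constructed bundle, handle the boundary case where $C_t$ is the nodal Lefschetz degeneration (so that one works with the partial normalization, or extends the argument over $\Delta_0^{\mathrm{ram}}$ using that all members of $\Xi_9$ are irreducible, cf.\ the formulas \eqref{eq:Nikulin_pencil}), and rule out the possibility that $L$ or $\xi$ fails to be base-point free or that $h^0$ jumps. In other words, the obstacle is not a single slick inequality but the case analysis showing that \emph{every} way the condition defining $\dd_9$ could be met on a curve in a general Nikulin surface produces a sheaf violating the bound $v^2\geq -2$ for stability, or a line bundle outside the rank two lattice. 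Once that is in place, the conclusion $\Xi_9\cap\dd_9=\emptyset$ is immediate.
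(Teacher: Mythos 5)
Your proposal circles the right ideas (Lazarsfeld--Mukai bundles, the bound $v^2\geq -2$ for stable sheaves, the rigidity of the lattice $\mathbb Z\cdot C\oplus\mathfrak N$), but it is missing the one construction that makes the argument work, and the numerical data you do commit to is wrong. The Lazarsfeld--Mukai bundle of a $\mathfrak g^2_8$ on a genus $9$ curve $C\subset S$ has rank $h^0(C,L)=3$, not $2$, and its Mukai vector is $(3,[C],3)$ with square $16-18=-2$; this is perfectly consistent with stability and yields no contradiction whatsoever. The hypothesis $H^0(C,L\otimes\mathfrak e_C)\neq 0$ has to enter somewhere, and in the paper it enters as follows: it forces $H^1\bigl(S,E_{C,L}\otimes\mathfrak e^{\vee}\bigr)\cong H^1\bigl(C,\overline{L}\otimes\mathfrak e_C^{\vee}\bigr)\neq 0$, hence $\mathrm{Ext}^1(E_{C,L},\mathfrak e)\neq 0$, so one can form a \emph{non-split extension} $0\to\mathfrak e\to E\to E_{C,L}\to 0$. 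It is this rank $4$ bundle, with $v(E)=\bigl(4,[C]+\mathfrak e,2\bigr)$ and $v(E)^2=-4<-2$, that is forced to be unstable. Your phrase ``the analogous construction using the two line bundles simultaneously'' gestures at something like this but does not identify it, and without the extension there is no bundle with $v^2<-2$ to work with.

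The second gap is the endgame. You assert that instability of the constructed bundle ``forces a class in $\mathrm{Pic}(S)$ not present in $\mathbb Z\cdot C\oplus\mathfrak N$.'' It does not: every destabilizing subsheaf has first Chern class of the form $aC+N'$ with $N'\in\mathfrak N$, squarely inside the lattice. The actual contradiction is obtained by a case analysis on a maximal destabilizing subsheaf $E_1$ of $E$: one first shows $\mathrm{Hom}(E_1,\mathfrak e)=0$, so $E_1$ maps non-trivially to $E_{C,L}$; the $\mu$-semistability of $E_{C,L}$ then pins down $\rk(E_1)=3$ and $c_1(E_1)\equiv C+N'$; and finally the resulting quotient $Q$ of $E_{C,L}$ has $c_1(Q)\in\mathfrak N$ yet must be globally generated (as a quotient of the globally generated $E_{C,L}$), which is impossible for a sheaf of the form $\mathcal I_{\xi}\bigl(b_1N_1+\cdots+b_8N_8\bigr)$. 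Global generation, not the lattice alone, delivers the contradiction. You are right that the nodal members of the Lefschetz pencil must be handled, but in the paper this is a one-line remark at the end of the proof, not the crux.
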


\begin{proof}
Let $C\subseteq S$ be a curve in the polarization class $|C|$ of $S$, thus $C\cdot N_i=0$ for $i=1, \ldots, 8$. We fix $L\in W^2_8(C)$ such that $H^0(C, L\otimes \mathfrak{e}_C)\neq 0$ and set again $\overline{L}:=\omega_C\otimes L$.
Each curve in the linear system $|C|$ is Brill-Noether general, cf. \cite[Lemma 5.1]{FK}, in particular $L$ is globally generated and we consider the associated \emph{Lazarsfeld-Mukai bundle} \cite{La}
$$0\longrightarrow E_{C,L}^{\vee}\longrightarrow H^0(C,L)\otimes \OO_S\longrightarrow C\longrightarrow 0.$$
By dualizing one has the following exact sequence
\begin{equation}\label{eq:LM}
0\longrightarrow H^0(C,L)^{\vee}\otimes \OO_C\longrightarrow E_{C,L}\longrightarrow \overline{L}\longrightarrow 0.
\end{equation}
Observe that $E_{C,L}$ is globally generated. Tensoring the exact sequence (\ref{eq:LM}) by $\mathfrak{e}^{\vee}$, taking global section and using that $H^i\bigl(S, \mathfrak{e}^{\vee}\bigr)=0$ for all $i$ (cf. \cite[Lemma 1.3]{FV1}), we conclude that
$H^1\bigl(S, E_{C,L}\otimes \mathfrak{e}^{\vee}\bigr)\stackrel{\cong}\longrightarrow H^1\bigl(C,\overline{L}\otimes \mathfrak{e}_C^{\vee}\bigr)$, therefore $H^1\bigl(S, E_{C,L}\otimes \mathfrak{e}^{\vee}\bigr)\neq  0$. It follows that $\mbox{Ext}^1(E_{C,L}, \mathfrak{e})\neq 0$, that is, there exist a non-trivial extension
\begin{equation}\label{eq:extension}
0\longrightarrow \mathfrak{e} \longrightarrow E\longrightarrow E_{C,L}\longrightarrow 0.
\end{equation}
Since $c_1(E_{C,L})=[C]$, whereas $h^0\bigl(S, E_{C,L}\bigr)=h^0(C,L)+h^0(C, \overline{L})=6$ and $h^i\bigl(S, E_{C,L}\bigr)=0$ for $i=1,2$, we compute the Mukai vector
$$v(E)=\bigl(4, [C]+\mathfrak{e}, 2\bigr),$$
therefore $v(E)^2=-4<-2$, in particular the vector bundle $E$ is not stable and hence also not $\mu=\mu_C$-stable, that is, slope stable with respect to the polarization defined as $\mu_C(F):=\frac{c_1(F)\cdot C}{\rk(F)}$, for any coherent sheaf $F$ on $S$\footnote{Usually the Mumford-Takemoto $\mu:=\mu_C$-stability is defined with respect to an ample line bundle on $S$, but as pointed out in both \cite{GKP} and \cite[Remark 4.C.4]{HL} this assumption is too strong and can be replaced with the one that $C$ be big and nef, which is precisely the case at hand when $C$ is the polarization of a Nikulin surface.}

\vskip 3pt
Let $E_1$ be a maximally destabilizing subsheaf of $E$ of maximal rank $r:=\rk(E_1)\leq 3$. We may assume that the quotient $G=E/E_1$ is torsion free, hence $E_1$ is locally free and we have the following diagram:

\begin{equation}
\begin{tikzcd}[column sep=30pt,row sep=28pt]
& & E_1 \ar[d, hook] \ar[dr, "\phi" ] & & \\
0 \ar[r] & \mathfrak{e} \ar[r] & E \ar[r] \ar[d, two heads] & E_{C,L} \ar[r] & 0 \\
& & G & &
\end{tikzcd}
\end{equation}
We write $c_1(E_1)=a[C]+N'$, where $N'\in \mathfrak{N}$, in particular $C\cdot N'=0$. Then we write $\mu(E_1)=\frac{16a}{r}\geq \mu(E)=4$, which yields
$a\geq 1$.

We claim that $\mbox{Hom}(E_1,\mathfrak{e})=0$, therefore the image $\phi\in \mbox{Hom}(E_1, E_{C,L})$ of the injection $E_1\hookrightarrow E$ is a non-zero morphism. Indeed, assuming $0\neq h\in \mbox{Hom}(E_1, \mathfrak{e})$, set $E_1':=\mbox{Ker}(h)$ and write down an exact sequence
$$0\longrightarrow E_1'\longrightarrow E_1\stackrel{h}\longrightarrow \mathfrak{e}(-D)\otimes \mathcal{I}_{\xi/S}\longrightarrow 0,$$
where $\xi$ is a $0$-dimensional subscheme and $D$ is an effective divisor on $S$ respectively. In particular, $c_1(E_1')\cdot C=c_1(E_1)\cdot C+D\cdot C\geq c_1(E_1)\cdot C$ and therefore $\mu(E_1')=\frac{c_1(E_1')\cdot C}{\rk{E_1'}}>\frac{c_1(E_1)\cdot C}{\rk(E_1)}$, thus contradicting the maximality of $E_1$ among all destabilizing subsheaves of $E$.

\vskip 4pt

Therefore $\phi\colon E_1\rightarrow E_{C,L}$ is a non-zero morphism. Since the Lazarsfeld-Mukai bundle $E_{C,L}$ is easily shown to be $\mu$-semistable (the same proof as in the case of $K3$ surfaces of Picard number one treated in \cite{La} applies here as well), it follows that $c_1\bigl(E_{C,L}\otimes E_1^{\vee}\bigr)\cdot C\geq 0$, which yields $r=3$ and $a=1$, that is, $c_1(E_1')= C+N'$.

\vskip 3pt

Assume first $\phi$ is injective. Accordingly we have an exact sequence
$$0\longrightarrow E_1\longrightarrow E_{C,L}\longrightarrow Q\longrightarrow 0,$$
where $Q=\mathcal{I}_{\xi/S}(-N')$. Since $E_{C,L}$ is globally generated, $Q$ must be globally generated as well, which is impossible, for no sheaf of type $\mathcal{I}_{\xi}\bigl(b_1N_1+\cdots+b_8N_8\bigr)$ can be globally generated. In the general case, set $\K:=\mbox{Ker}(\phi)$, in particular $\K\hookrightarrow \cM$. Setting $Q:=\mbox{Coker}(\phi)$, we again conclude that $c_1(Q)\in \mathfrak{N}$, whereas $Q$ must be globally generated, which is a contradiction.

\vskip 3pt

We finish the argument by noticing that the same reasoning works for a $1$-nodal curve $C\in |C|$ and for a globally generated line bundle $L\in W^2_8(C)$, therefore $\Xi_9\cap \dd_9=\emptyset$.
\end{proof}

We are now in a position to finish the calculation of the class of the closure of the divisor $\cD_9$.

\begin{theorem}\label{thm:divisor_class}
One has the following formula for the closure $\widetilde{\cD}_9$ in $\pr_9$ of the divisor $\cD_9$:
$$[\widetilde{\cD}_9]=366\lambda-52\bigl(\delta_0'+\delta_0'')-\frac{187}{2}\delta_0^{\mathrm{ram}}-\alpha\cdot \delta_0''\in \mathrm{Pic}(\pr_9),$$
where $\alpha\geq 0$.
\end{theorem}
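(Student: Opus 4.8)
The plan is to compute $[\widetilde{\cD}_9]$ by pushing forward the class of the degeneracy locus $\widetilde{Z}$ from $\prg^2_8$ to $\pr_9$ via $\sigma$, exactly as in the standard approach to Brill--Noether divisor class calculations on $\overline{\cM}_g$ and $\overline{\cR}_g$. Theorem \ref{thm:classdege} gives
$$[\widetilde{Z}]=c_1(\cB-\cA)=-\lambda-\frac{\mathfrak a}{2}+\frac{\mathfrak b}{2}+\frac14\sigma^*(\delta_0^{\mathrm{ram}}),$$
so applying $\sigma_*$ and using the push-forward formulas \eqref{eq:pushforward} together with $\sigma_*(\sigma^*\alpha)=\deg(\sigma)\cdot\alpha$ and the degree $\deg(\sigma)=42$ (the number of $\mathfrak g^2_8$'s on a general genus $9$ curve), one gets a $\mathbb Q$-divisor class of the shape $A\lambda-B(\delta_0'+\delta_0'')-C\,\delta_0^{\mathrm{ram}}$. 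First I would carry out this arithmetic: $\sigma_*(-\lambda)=-42\lambda$, $\sigma_*(-\tfrac12\mathfrak a)=282\lambda-\tfrac{83}{2}(\delta_0'+\delta_0''+2\delta_0^{\mathrm{ram}})$, $\sigma_*(\tfrac12\mathfrak b)=126\lambda-\tfrac{21}{2}(\delta_0'+\delta_0''+2\delta_0^{\mathrm{ram}})$, and $\sigma_*(\tfrac14\sigma^*\delta_0^{\mathrm{ram}})=\tfrac{42}{4}\delta_0^{\mathrm{ram}}=\tfrac{21}{2}\delta_0^{\mathrm{ram}}$; summing gives $[\sigma_*\widetilde Z]=366\lambda-52(\delta_0'+\delta_0'')-\bigl(83+21-\tfrac{21}{2}\bigr)\delta_0^{\mathrm{ram}}=366\lambda-52(\delta_0'+\delta_0'')-\tfrac{187}{2}\delta_0^{\mathrm{ram}}$.

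The subtlety — and the reason the statement has the correction term $-\alpha\,\delta_0''$ with $\alpha\geq 0$ rather than an equality — is that $\sigma_*[\widetilde Z]$ is the class of the full degeneracy locus of $\chi$, and the locus $\sigma(\widetilde Z)$ may contain boundary components with multiplicity that do not correspond to the honest closure $\widetilde{\cD}_9$ of $\cD_9$. Over $\cM_9^\circ$ (and over $\Delta_0^{\prime,\circ}$ and $\Delta_0^{\mathrm{ram},\circ}$) one checks, as in \cite{FL}, that $\sigma$ maps $Z$ birationally onto $\cD_9$ and that $\widetilde Z$ carries no excess components there, so the coefficients of $\lambda$, $\delta_0'$ and $\delta_0^{\mathrm{ram}}$ in $[\widetilde{\cD}_9]$ agree with those in $\sigma_*[\widetilde Z]$ — this is where one uses that $\pr_9$ only omits codimension $\geq 2$ loci from $\rr_9$. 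The only component over which $\widetilde Z$ can fail to be reduced/irreducible of the expected type is $\Delta_0''$: when $\nu^*\eta$ is trivial, the map $\chi_{C,L,\eta}$ degenerates for reasons unrelated to the geometric Brill--Noether condition, so $\sigma(\widetilde Z)$ contains $\Delta_0''$ (possibly with some multiplicity $c\geq 0$) as a spurious component. Writing $[\widetilde Z]=[\widetilde{\cD}_9]+c\,[\text{(preimage of }\Delta_0'')]+\cdots$ and pushing forward yields $[\widetilde{\cD}_9]=366\lambda-52(\delta_0'+\delta_0'')-\tfrac{187}{2}\delta_0^{\mathrm{ram}}-\alpha\,\delta_0''$ for some $\alpha\geq 0$; the point worth stressing is that, crucially, \emph{no} correction appears along $\delta_0'$ or $\delta_0^{\mathrm{ram}}$.

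The main obstacle is precisely this boundary analysis: one must verify that for a general pair $[C_{xy},\eta]\in\Delta_0^{\prime,\circ}$ with $\nu^*\eta$ nontrivial, and for a general $[X,\eta]\in\Delta_0^{\mathrm{ram},\circ}$, the morphism $\chi$ is injective on the generic point — i.e. that $\Delta_0'$ and $\Delta_0^{\mathrm{ram}}$ are not contained in $\sigma(\widetilde Z)$ — and conversely that the degeneracy locus contains the closure $\widetilde{\cD}_9$ generically reduced. For $\Delta_0'$ this is essentially the content of the limit-linear-series / admissible-covers argument in \cite[Section 2]{FL}, and for $\Delta_0^{\mathrm{ram}}$ one uses that on $X=C\cup_{\{x,y\}}\PP^1$ the sheaf $L$ is a locally free limit with $h^0=3$ and a direct dimension count on $H^1(X,M_L\otimes\eta)\to H^0(X,L)\otimes H^1(X,\eta)$ shows non-degeneracy for general data, so the coefficient $-\tfrac{187}{2}$ of $\delta_0^{\mathrm{ram}}$ is exact. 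The non-negativity $\alpha\geq 0$ is automatic since $\widetilde{\cD}_9$ is the closure of an effective divisor and $\sigma_*[\widetilde Z]-[\widetilde{\cD}_9]$ is an effective combination of boundary divisors supported on $\Delta_0''$. I would present the proof by first recording the push-forward computation, then invoking the $\Delta_0'$ and $\Delta_0^{\mathrm{ram}}$ transversality statements from \cite{FL} (with the $\Delta_0^{\mathrm{ram}}$ case spelled out since it is the one genuinely needed later), and finally absorbing the undetermined multiplicity along $\Delta_0''$ into $\alpha$.
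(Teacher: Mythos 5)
Your push-forward computation is exactly the one the paper performs: using $\deg(\sigma)=42$, the projection formula and the formulas (\ref{eq:pushforward}) one gets $\sigma_*[\widetilde Z]=366\lambda-52(\delta_0'+\delta_0'')-\tfrac{187}{2}\delta_0^{\mathrm{ram}}$, and the arithmetic you record is correct. You have also correctly isolated the single non-formal input, namely that neither $\Delta_0'$ nor $\Delta_0^{\mathrm{ram}}$ is a component of $\sigma(\widetilde Z)$, so that the only possible discrepancy between $\sigma_*[\widetilde Z]$ and $[\widetilde{\cD}_9]$ is a non-negative multiple of $\delta_0''$.

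That input, however, is precisely where your proposal has a gap. You justify the generic non-degeneracy of $\chi$ along $\sigma^*(\delta_0')$ and $\sigma^*(\delta_0^{\mathrm{ram}})$ by pointing to ``the limit-linear-series argument in [FL, Section 2]'' and to ``a direct dimension count'' on $X=C\cup_{\{x,y\}}\PP^1$, but neither is carried out and neither is available off the shelf: [FL] states the class of $\dd_9$ without proof (as the paper itself remarks), and for a quasi-stable curve $X=C\cup_{\{x,y\}}\PP^1$ with $\eta_C^{2}\cong\OO_C(-x-y)$ the required vanishing $H^0(X,L\otimes\eta)=0$ for \emph{every} $L\in W^2_8$ is an existence/genericity statement about an injection between two $24$-dimensional spaces, not a dimension count --- a priori the degeneracy locus could contain all of $\Delta_0^{\mathrm{ram}}$, in which case the coefficient $-\tfrac{187}{2}$ would be wrong and the uniruledness argument, which feeds $R\cdot\delta_0^{\mathrm{ram}}=8$ against exactly this coefficient, would collapse. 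The paper closes this gap with Theorem \ref{thm:nikulin}: a Lefschetz pencil $\Xi_9$ of Prym curves on a general Nikulin surface satisfies $\Xi_9\cdot\delta_0'=56>0$ and $\Xi_9\cdot\delta_0^{\mathrm{ram}}=8>0$, yet the Lazarsfeld--Mukai bundle and Mukai-vector stability argument shows $H^0(C,L\otimes\mathfrak{e}_C)=0$ for every member of the pencil --- including the $1$-nodal ones --- and every globally generated $L\in W^2_8$; hence $\sigma^{-1}(\Xi_9)\cap\widetilde Z=\emptyset$ and neither boundary divisor can lie in $\sigma(\widetilde Z)$. To complete your proof you must either import this Nikulin-pencil argument or supply an honest degeneration analysis at a general point of each of $\Delta_0'$ and $\Delta_0^{\mathrm{ram}}$.
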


\begin{proof} We determined in Theorem \ref{thm:classdege} the class of the degeneracy locus $Z$ of the morphism $\chi\colon \cA\rightarrow \cB$ of vector bundles over $\prg^2_8$ defined in (\ref{eq:sheaf_morphism}). Using the formulas (\ref{eq:pushforward}), we conclude that $\sigma_*\bigl([\widetilde{Z}]\bigr)=366\lambda-52\bigl(\delta_0'+\delta_0'')-\frac{187}{2}\delta_0^{\mathrm{ram}}$. Furthermore, Theorem \ref{thm:nikulin} shows that the morphism $\chi$ is generically non-degenerate along (each component of) the boundary divisors $\sigma^*(\delta_0')$ and $\sigma^*(\delta_0^{\mathrm{ram}})$, that is, the class of $\sigma_*(\widetilde{Z})$ and that of $\widetilde{\cD}_9$ coincide up to a multiple of $\delta_0''$. The precise value of this multiple plays no further role in our argument.
\end{proof}

\section{The uniruled parametrization of $\overline{\cD}_9$}\label{sec:uniruledd}

We  now spell out the uniruled parametrization of the divisor $\dd_9$ that was sketched in the Introduction. Note that we do not establish that $\dd_9$ is irreducible, instead we  show that each irreducible component of $\dd_9$ is uniruled.

Let $[C,L]\in \mathcal{G}^2_8$ be a general element corresponding to a smooth curve $C$ of genus $9$ and a complete, globally generated linear system $L\in G^2_8(C)$. Let $\varphi_L\colon C\rightarrow \PP^2$ be the map induced by $|L|$ and set $\Gamma:=\varphi_L(C)$ and $\overline{L}:=\omega_C\otimes L^{\vee}$.  Since $C$ may be assumed to be Petri general, $H^1(C, \overline{L}^2)=0$, therefore $h^0\bigl(C, \overline{L}^2\bigr)=8$. Due to our generality assumptions, $\Gamma$ is a nodal octic and let $o_1, \ldots, o_{12}$ be its nodes and denote $\xi:=o_1+\cdots+o_{12}\in \mbox{Hilb}^{12}(\PP^2)$.  Set
$$\epsilon \colon X:=\mathrm{Bl}_{\xi}(\PP^2)\longrightarrow \PP^2$$
and denote by $E_1, \ldots, E_{12}$ the exceptional divisors corresponding to the $12$ points and by $h:=\epsilon^*(\OO_{\PP^2}(1))\in \mbox{Pic}(X)$ the hyperplane class.
Note that $X$ is also endowed with a regular degree $4$ cover
\begin{equation}\label{eq:degree4}
q:=\varphi_{|4h-E_1-\cdots-E_{12}|}\colon X\longrightarrow \PP^2.
\end{equation}

Conversely, we introduce the universal parameter space of $12$-nodal plane octics, that is,
$$\P:=\Bigl\{\bigl(o_1+\cdots+o_{12}, C\bigr):o_1+\cdots+o_{12}\in \mathrm{Hilb}^{12}(\PP^2), \ C\in \bigl|8h-2E_1-\cdots-2E_{12}\bigr|\Bigl\},$$
then observe that $\P\rightarrow \mbox{Hilb}^{12}(\PP^2)$ is generically a $\PP^8$-bundle and the map
$$ \begin{tikzcd}[column sep=22pt]
\P \dblq SL(3)\dashrightarrow \mathcal{G}^2_8, \ \ \bigl(o_1+\cdots+o_{12}, \
C\bigr)\mapsto [C, \OO_C(h)]
\end{tikzcd}
$$
is a birational isomorphism.

By adjunction $\overline{L}\cong \OO_C\bigl(4h-E_1-\cdots-E_{12}\bigr)$ and the restriction map $\bigl|\OO_X(C)\bigl|\rightarrow \bigr|\OO_C(C)\bigr|=\bigl|\overline{L}^2\bigr|$ is dominant, in particular every element of the linear system $|\overline{L}^2|$ is cut out by an plane octic singular at $o_1, \ldots, o_{12}$.

\vskip 4pt

Note that the resolution of the scheme $\xi=o_1+\cdots+o_{12}$ of $12$ general points in $\PP^2$ has the following form described by the Hilbert-Burch theorem:

$$
\begin{tikzcd}[column sep=18pt]
0\longrightarrow  \OO_{\PP^2}(-6)^2 \longrightarrow  \OO_{\PP^2}(-4)^3  \longrightarrow   \mathcal{I}_{\xi}\longrightarrow 0.
\end{tikzcd}
$$
Precisely, one picks general quadratic forms $a, b,c, a', b',c'\in H^0\bigl(\PP^2, \OO_{\PP^2}(2)\bigr)$ and then $\xi$ can be described by the following condition in $\PP^2=\PP^2_{[x_1, x_2, x_3]}$:
\begin{equation}\label{eq:hilbert_burch}
\mbox{rk} \left(\begin{array}{rrr}
a(x_1, x_2, x_3)  & b(x_1, x_2, x_3)  & c(x_1, x_2, x_3)\\
a'(x_1, x_2, x_3)  & b'(x_1, x_2, x_3)   & c'(x_1, x_2, x_3)
\end{array}\right)\leq 1
\end{equation}
The quadruple cover $q\colon X\rightarrow \PP^2$ defined by (\ref{eq:degree4}) is then the resolution of the rational map $\PP^2\stackrel{4:1} \dashrightarrow \PP^2$ given by $(ab'-a'b, \ ac'-a'c, \ bc'-b'c)$.

\vskip 5pt

Assume $\bigl[C, L,\eta\bigr]\in \mathcal{RG}_8^2$ is an element such that $H^0(C, L\otimes \eta)\neq 0$, that is, $[C, \eta]\in \cD_9$.  Write
$\overline{L}\otimes \eta=\OO_C(y_1+\cdots+y_8)$, therefore $\overline{L}^{\otimes 2}\cong \OO_C(2y_1+\cdots+2y_8)$. As long as $C$ is a Petri general curve, it follows that there exists a curve on $X$
$$
\begin{tikzcd}[column sep=22pt]
C'\in \bigl|8h-2E_1-\cdots-2E_{12}\bigr| \ \ \mbox{ such that  } \ \  C'\cdot C=2y_1+\cdots+ 2y_8.
\end{tikzcd}
$$

As explained in the Introduction, we  consider the pencil $R\subseteq \rr_9$ generated by the curves $C$ and $C'$. Its general element is of the form $\bigl[C_t, \OO_{C_t}\bigl(4h-E_1-\cdots-E_{12}\bigr)(-y_1-\cdots-y_8)\bigr]$.

Working globally, we can   consider the closure $\mathcal{Y}$ inside $\P \times_{\mathrm{Hilb}^{12}(\PP^2)} \P$ of the following locus
$$
\Bigl\{(\xi, C, C')\in \P \times_{\mathrm{Hilb}^{12}(\PP^2)} \P : C, C'\in \bigl|8h-2E_1-\cdots-2E_{12}\bigr|, \  \  C'\neq C \   \  \mbox{  with }\  C\cdot C'=2y_1+\cdots+2y_8\Bigr\}.
$$
The condition defining $\mathcal{Y}$ can be reformulated as saying that there exist points $y_1, \ldots, y_8\in X$, such that the curves $C$ and $C'$ intersect non-transversally at $y_1, \ldots, y_8$.
Observe that the quotient $\mathcal{Y}\dblq SL(3)$ has two $\PP^1$-bundle structures over the Brill-Noether divisor
\begin{equation}\label{eq:P1_fibration}
\begin{tikzcd}[column sep=30pt,row sep=28pt]
& \mathcal{Y}\dblq SL(3) \ar[dl, "\upsilon_1"] \ar[dr, "\upsilon_2"] &
\\
\dd_9  &  & \dd_9
\end{tikzcd}
\end{equation}
defined by forgetting either  $C$ or $C'$, that is, $\upsilon_1\bigl(\xi, C, C'\bigr):=\bigl[C, \omega_C(-h)(-y_1-\cdots-y_8)\bigr]$ respectively $\upsilon_2\bigl(\xi, C, C'):=\bigl[C', \omega_{C'}(-h)(-y_1-\cdots-y_8)\bigr]$. The rational curve $R$ passing through a general point $[C, \eta]$ in a component of $\dd_9$ corresponds precisely to  $\upsilon_2\bigl(\upsilon^{-1}_1\bigl([C, \eta]\bigr)\bigr)$.

We make now the assumption $(\dag)$ that for a given element $[C, L, \eta]$ as above,  every curve in the pencil $R$ spanned by $C$ and $C'$ is irreducible and at most $1$-nodal. Subject to this assumption, which we establish later along at least one irreducible component of the divisor $\dd_9$, we can compute the intersection numbers of $R$ and establish (\ref{eq:intnumbers2}) from the Introduction.

\begin{theorem}\label{thm:int_numbers3}
Assuming $(\dag)$ holds,  one has the following intersection numbers:
$$
\begin{tikzcd}[column sep=32pt]
R\cdot \lambda=9, \ \ R\cdot \delta_0'=47, \ \  R\cdot \delta_0^{\mathrm{ram}}=8, \ \  R\cdot \delta_0''=0.
\end{tikzcd}
$$
\end{theorem}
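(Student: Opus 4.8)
The plan is to resolve the pencil $\{\Gamma_t\}_{t\in\PP^1}$ of \eqref{eq:pencil1} into a fibred surface $\pi\colon Y\to\PP^1$, equip it with a relative Prym sheaf inducing a morphism $\PP^1\to\rr_9$ birational onto $R$, and then read off the four numbers from invariants of $Y$.

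\emph{Step 1: the surface.} Starting from the blow-up $X=\mathrm{Bl}_\xi(\PP^2)$ at the nodes $o_1,\dots,o_{12}$ of $\Gamma$ introduced above, I would further blow up the eight points $y_1,\dots,y_8$ and then the eight infinitely near points $y_j'$ corresponding to the common tangent direction of $\Gamma$ and $\Gamma'$ at $y_j$, obtaining a surface $Y$ ($28$ blow-ups of $\PP^2$) with exceptional curves $E_1,\dots,E_{12}$, $F_1,\dots,F_8$ (strict transforms, $F_j^2=-2$) and $G_1,\dots,G_8$ ($G_j^2=-1$, $F_j\cdot G_j=1$). Since the base locus of $\langle\Gamma,\Gamma'\rangle$ meets each $y_j$ in the length-$2$ scheme cut out by the common tangent line, the pencil becomes base-point-free on $Y$ of class $D=8h-2\sum_iE_i-\sum_jF_j-2\sum_jG_j$ and defines $\pi\colon Y\to\PP^1$ with general fibre the smooth genus $9$ normalization of a $12$-nodal octic. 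A routine computation gives $D^2=0$, $K_Y=-3h+\sum_iE_i+\sum_j(F_j+2G_j)$, $K_Y+D=5h-\sum_iE_i$, so $\mathcal N:=4h-\sum_iE_i-\sum_jG_j$ restricts on each fibre $C_t$ to $\omega_{C_t}(-h)(-y_1-\cdots-y_8)=\eta_t$; moreover $2\mathcal N-D=\sum_jF_j$ is supported in the fibres over the eight special values below, so $\eta_t^{\otimes2}\cong\OO_{C_t}$ on the remaining fibres. After the twist of Step 2 this yields $\psi\colon\PP^1\to\rr_9$ with image $R$, birational onto $R$ for a general choice of $[C,L,\eta]$.

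\emph{Step 2: the singular fibres.} Assuming $(\dag)$, the only singular fibres are of two kinds: (i) members of the pencil with a thirteenth node at a point off the $o_i$ and the $y_j$ — along such a fibre $Y$ is smooth, so it is an irreducible $1$-nodal stable curve met transversally, and for a general choice $\mathcal N$ restricts non-trivially to its normalization, placing the point on $\Delta_0'$; and (ii) for each $j$, a local analysis at $y_j$ shows there is exactly one member $\Gamma_{t_j}$ acquiring a node at $y_j$, with its two branches transverse to the common tangent line, so that on $Y$ its strict transform is a smooth genus $8$ curve meeting the $(-2)$-curve $F_j$ in two points $x,y$; the fibre over $t_j$ is $\widetilde{\Gamma_{t_j}}\cup F_j$. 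Twisting $\mathcal N$ by $D-F_j$ over $t_j$ turns $F_j$ into an exceptional component carrying $\OO_{F_j}(1)$ and makes the restriction of the twisted sheaf to the genus $8$ component square to $\OO(-x-y)$; thus this fibre represents a point of $\Delta_0^{\mathrm{ram}}$, and the eight $t_j$ are pairwise distinct.

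\emph{Step 3: the numbers.} The Hodge number is unaffected by the twist and by contracting $(-2)$-curves, so by the Leray spectral sequence and relative duality $R\cdot\lambda=\deg\pi_*\omega_{Y/\PP^1}=\chi(\OO_Y)+g-1=1+8=9$. Contracting the eight $F_j$ to pass to the stable family is crepant, so $K^2_{Y/\PP^1}=K_Y^2+4(2g-2)=-19+64=45$ and Mumford's relation $12\lambda=\kappa+\delta$ pulls back to $R\cdot(\delta_0'+\delta_0''+2\delta_0^{\mathrm{ram}})=12\cdot9-45=63$. A topological Euler number count, $e(Y)=e(\PP^2)+28=31$ balanced against $2(-16)$ and the contributions $e(F_t)-e(F)$ equal to $1$ and $2$ for fibres of type (i) (Euler number $-15$) and (ii) (Euler number $-14$), gives $47$ fibres of type (i). Hence $R\cdot\delta_0'=47$ and $R\cdot\delta_0''=0$, and substituting into the displayed relation forces $R\cdot\delta_0^{\mathrm{ram}}=8$ — i.e. the factor $2$ produced at the $A_1$-singularity of the stabilized total space over $t_j$ is exactly cancelled by the ramification index $2$ of $\pi\colon\rr_9\to\mm_9$ along $\Delta_0^{\mathrm{ram}}$, so $R$ meets $\Delta_0^{\mathrm{ram}}$ transversally at each $t_j$. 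Finally all fibres are irreducible, so $R$ is disjoint from every other boundary divisor.

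\emph{Main obstacle.} The substantive part is Step 2: the local study at the tangency points (existence, uniqueness and transversality of the node of $\Gamma_{t_j}$ and its position relative to the common tangent line), the precise twist of $\mathcal N$ turning the reducible fibre over $t_j$ into a genuine $\Delta_0^{\mathrm{ram}}$-point, and the accompanying bookkeeping — the $28$ blow-ups, the $(-2)$-curves, the $A_1$-singularities of the stable model and their interplay with the double cover $\pi$. Justifying $R\cdot\delta_0''=0$, namely that no member of the pencil has trivial Prym restriction on the normalization of its stable model, likewise rests on the genericity of $[C,L,\eta]$ in its component of $\dd_9$.
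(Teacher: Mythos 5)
Your proposal is correct and follows essentially the same route as the paper: the same $28$-fold blow-up of $\PP^2$ fibred over the pencil, the identification of the eight reducible fibres $\widetilde{\Gamma}_{t_j}\cup F_j$ as transverse $\Delta_0^{\mathrm{ram}}$-points, $R\cdot\lambda=\chi(\OO)+g-1=9$, and the total boundary degree $63$ (the paper uses Noether's formula $c_2=12\chi-K^2$, which is equivalent to your $12\lambda=\kappa+\delta$ and your Euler-number count of singular fibres). The only step where the paper is more explicit than your appeal to genericity is $R\cdot\delta_0''=0$: a fibre lying in $\Delta_0''$ would force a plane quartic through $o_1,\ldots,o_{12}$ and $y_1,\ldots,y_8$, which would give $\overline{L}\cong\OO_C(y_1+\cdots+y_8)$ and hence $\eta\cong\OO_C$, a contradiction.
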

\begin{proof}
The assumption $(\dag)$ implies that every curve in the pencil $R=\{C_t\}_{t\in \PP^1}$ spanned by $C$ and $C'$ is irreducible and nodal. Furthermore, all curves in $R$ have a common tangent line $\ell_i$ at each of the points $y_i$ for $i=1, \ldots, 8$. We consider the blow-up of the rational surface $X$ at the points $y_1, \ldots, y_8$, as well as the points $\ell_1, \ldots, \ell_8$ regarded as points on the exceptional divisors $E_{y_1}$, \ldots, $E_{y_8}$ and set $X':=\mathrm{Bl}_{\{y_1, \ldots, y_8, \ell_1, \ldots, \ell_8\}}(X)$. We have an induced fibration
$$u\colon X'\longrightarrow \PP^1,$$
where $u^{-1}(t)$ is the proper transform of $C_t$. Note that in the pencil $R$ there exists for each $i=1, \ldots, 8$ precisely one curve $C_i=C_{t_i}$ that is singular at $y_i$ (and smooth at the remaining points $y_j$, with $j\neq i$). In this case $u^{-1}(t_i)=C'_{t_i}+E_{y_i}$, where $C'_{t_i}$ is a smooth curve of genus $8$ meeting $E_{y_j}$ transversally at two distinct points. The Prym curve structure on $u^{-1}(t_i)$ is provided by a line bundle $\eta_{t_i}\in \mbox{Pic}^0\bigl(u^{-1}(t_i)\bigr)$ such that $\eta_{E_{y_i}}\cong \OO_{E_{y_i}}(1)$, that is, each point $u^{-1}(t_i)$ gives rise to a point in the boundary divisor $\Delta_0^{\mathrm{ram}}$. It is also clear that these are the only points in the pencil $R$, where the sheaf inducing the Prym structure on the plane curve $\epsilon\bigl(u^{-1}(t)\bigr)$ is not locally free and that the intersection of $R$ with $\Delta_0^{\mathrm{ram}}$ at each of the points $[u^{-1}(t_i), \eta_{u^{-1}(t_i)}]$ is transversal, therefore
\begin{equation}\label{eq:ram_nr}
R\cdot \delta_0^{\mathrm{ram}}=8.
\end{equation}

To evaluate the remaining intersection numbers is now relatively easy. Observe first that $\chi\bigl(X', \OO_{X'}\bigr)=\chi(X, \OO_X)=1$, therefore
\begin{equation}\label{eq:lambda_nr}
R\cdot \lambda=\chi\bigl(X', \OO_{X'}\bigr)+g-1=9.
\end{equation}
Furthermore, $K_{X'}^2=K_X^2-16=K_{\PP^2}-(12+16)=-19$. Then applying Noether's formula we write $c_2(X')=12\chi\bigl(X', \OO_{X'}\bigr)-K_{X'}^2=12+19=31$, implying that
\begin{equation}\label{eq:bdry_nr}
R\cdot \bigl(\delta_0'+\delta_0''+2\delta_0^{\mathrm{ram}}\bigr)=c_2(X')+4(g-1)=31+4\cdot 8=63.
\end{equation}
Clearly $R\cdot \delta_0''=0$, for a point in the intersection would imply the existence of a plane quartic passing through all the points $o_1, \ldots, o_{12}$ and $y_1, \ldots, y_8$, which yields $\eta\cong \OO_C$, which is a contradiction. Putting this, (\ref{eq:ram_nr}) and (\ref{eq:bdry_nr}) together, we obtain $R\cdot \delta_0'=63-2\cdot 8=47$, which finishes the proof.
\end{proof}

\begin{corollary}\label{cor:inters}
One has $R\cdot K_{\rr_9}=-1$ and $R\cdot [\dd_9]=102$.
\end{corollary}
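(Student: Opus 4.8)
The plan is to read off both intersection numbers directly from Theorem~\ref{thm:int_numbers3}, the canonical class formula (\ref{eq:canonical1}), and the divisor class formula (\ref{eq:classd9}); no new geometric input is needed beyond the assumption $(\dag)$ already in force.

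First I would record the following immediate consequence of $(\dag)$: since every curve $C_t$ in the pencil $R$ is irreducible, the Prym curve $[C_t,\eta_t]$ cannot lie on any boundary divisor of $\rr_9$ of type $\Delta_i$, $\Delta_{9-i}$ or $\Delta_{i:9-i}$ with $i\geq 1$ (these parametrize reducible stable curves), so
$$R\cdot\delta_i = R\cdot\delta_{9-i} = R\cdot\delta_{i:9-i} = 0\qquad (1\leq i\leq 4).$$
Feeding this together with the values $R\cdot\lambda=9$, $R\cdot\delta_0'=47$, $R\cdot\delta_0''=0$, $R\cdot\delta_0^{\mathrm{ram}}=8$ of Theorem~\ref{thm:int_numbers3} into (\ref{eq:canonical1}) gives
$$R\cdot K_{\rr_9} = 13\cdot 9 - 2\cdot(47+0) - 3\cdot 8 = 117 - 94 - 24 = -1 .$$

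For the second identity I would pair $R$ with the class (\ref{eq:classd9}) of $\dd_9=\overline{\cD}_9$. The only point to watch is the effective error term $E$: by Theorem~\ref{thm:div_intro} it is a non-negative combination of boundary divisors that involves neither $\delta_0'$ nor $\delta_0^{\mathrm{ram}}$, hence it is supported on $\Delta_0''$ together with the divisors $\Delta_i,\Delta_{9-i},\Delta_{i:9-i}$ for $i\geq 1$. Since $R$ has vanishing intersection with all of these ($R\cdot\delta_0''=0$ as in Theorem~\ref{thm:int_numbers3}, the remaining ones by the previous paragraph), we get $R\cdot E=0$, so only the coefficients of $\lambda,\delta_0',\delta_0^{\mathrm{ram}}$ contribute and
$$R\cdot[\dd_9] = 366\cdot 9 - 52\cdot(47+0) - \frac{187}{2}\cdot 8 = 3294 - 2444 - 748 = 102 .$$

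I do not expect any obstacle internal to this corollary: once Theorem~\ref{thm:int_numbers3} is available it is a one-line numerical computation. The genuine content sits upstream, in the transversality assumption $(\dag)$ — that the pencil $R$ through an appropriately chosen general point of a component of $\dd_9$ consists of irreducible, at most $1$-nodal curves — which is precisely what underlies both the vanishings $R\cdot\delta_i=0$ $(i\geq 1)$ used here and the transversality of $R\cap\Delta_0^{\mathrm{ram}}$ behind $R\cdot\delta_0^{\mathrm{ram}}=8$; establishing $(\dag)$ along at least one component of $\dd_9$ is the task carried out in the remainder of Section~\ref{sec:uniruledd}.
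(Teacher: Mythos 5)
Your proposal is correct and matches the paper's (one-line) proof: both results follow by pairing the intersection numbers of Theorem \ref{thm:int_numbers3} (together with $R\cdot\delta_i=R\cdot\delta_{9-i}=R\cdot\delta_{i:9-i}=0$, forced by the irreducibility of all curves in the pencil) with the canonical class formula (\ref{eq:canonical1}) and with the class of $\dd_9$, and your observation that the undetermined effective term $E$ (equivalently, the $\alpha\,\delta_0''$ term of Theorem \ref{thm:divisor_class}) pairs to zero with $R$ is exactly the point that makes the computation well defined. The arithmetic $13\cdot 9-2\cdot 47-3\cdot 8=-1$ and $366\cdot 9-52\cdot 47-\tfrac{187}{2}\cdot 8=102$ checks out.
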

\begin{proof}
This is a direct consequence of Theorems \ref{thm:int_numbers3}, \ref{thm:classdege} and of (\ref{eq:canonical1}).
\end{proof}

Doe to the (unlikely) possibility that the Brill-Noether divisor $\dd_9$ may be reducible, we need a somewhat stronger statement than Corollary \ref{cor:inters} to conclude that $K_{\rr_9}$ is not pseudo-effective.

\begin{theorem}\label{thm:anycomponent}
Let $\ddd$ be any irreducible component of the  divisor $\dd_9$. Then $R\cdot \ddd\geq 0$.
\end{theorem}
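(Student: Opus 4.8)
The plan is to reduce the inequality $R\cdot\ddd\ge 0$ to a single numerical statement about the class of $\ddd$, the essential geometric ingredient being the disjointness of a general Nikulin pencil from $\dd_9$ established in Theorem \ref{thm:nikulin}. If $\dd_9$ happens to be irreducible there is nothing to prove, as then $\ddd=\dd_9$ and $R\cdot\ddd=102$ by Corollary \ref{cor:inters}. So assume $\dd_9$ is reducible and write
$$[\ddd]=a\lambda-b_0'\delta_0'-b_0''\delta_0''-b_0^{\mathrm{ram}}\delta_0^{\mathrm{ram}}-\sum_{i=1}^{4}\bigl(c_i\delta_i+c_i'\delta_{9-i}+c_i''\delta_{i:9-i}\bigr).$$
Being an irreducible component of $\dd_9$, the divisor $\ddd$ is distinct from each of the boundary divisors $\Delta_0'$, $\Delta_0^{\mathrm{ram}}$, $\Delta_i$, $\Delta_{9-i}$, $\Delta_{i:9-i}$: none of these is contained in $\dd_9$, the case of $\Delta_0'$ and $\Delta_0^{\mathrm{ram}}$ being exactly the generic non-degeneracy of the morphism $\chi$ along these divisors noted in the proof of Theorem \ref{thm:nikulin}. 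Consequently the standard test-curve arguments yield $b_0',b_0^{\mathrm{ram}},c_i,c_i',c_i''\ge 0$, and running the same argument on the complementary effective divisor $\dd_9-\ddd$ and using the class formula $(\ref{eq:classd9})$ also gives $0\le b_0'\le 52$ and $0\le b_0^{\mathrm{ram}}\le\tfrac{187}{2}$.

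Next I would invoke Theorem \ref{thm:nikulin}: since $\ddd\subseteq\dd_9$ and a general Nikulin pencil $\Xi_9$ is disjoint from $\dd_9$, one has $\Xi_9\cdot\ddd=0$. Substituting the intersection numbers $(\ref{eq:Nikulin_pencil})$ for $g=9$ — namely $\Xi_9\cdot\lambda=10$, $\Xi_9\cdot\delta_0'=56$, $\Xi_9\cdot\delta_0''=0$, $\Xi_9\cdot\delta_0^{\mathrm{ram}}=8$, with $\Xi_9$ meeting no other boundary divisor — this reads $10a=56b_0'+8b_0^{\mathrm{ram}}$. On the other hand, Theorem \ref{thm:int_numbers3} gives $R\cdot\lambda=9$, $R\cdot\delta_0'=47$, $R\cdot\delta_0^{\mathrm{ram}}=8$, $R\cdot\delta_0''=0$ and no other boundary intersection, so $R\cdot\ddd=9a-47b_0'-8b_0^{\mathrm{ram}}$. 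Subtracting the Nikulin relation $10a-56b_0'-8b_0^{\mathrm{ram}}=0$ collapses this to the simple expression
$$R\cdot\ddd=9b_0'-a=\tfrac15\bigl(17b_0'-4b_0^{\mathrm{ram}}\bigr),$$
so that the entire statement becomes equivalent to the single inequality $a\le 9b_0'$, i.e.\ $4b_0^{\mathrm{ram}}\le 17b_0'$.

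This inequality is the crux, and the step I expect to be the main obstacle. The formal input gathered so far — one Nikulin relation together with the coarse bounds $0\le b_0'\le 52$, $0\le b_0^{\mathrm{ram}}\le\tfrac{187}{2}$ — is by itself insufficient, since a component with $b_0'=0$ and $b_0^{\mathrm{ram}}>0$ would violate it; what is genuinely needed is a bound, valid for each component, of its multiplicity along $\Delta_0^{\mathrm{ram}}$ against its multiplicity along $\Delta_0'$. I would derive such a bound from the determinantal description of $\dd_9$ in Section \ref{sec:classd}: in the class $c_1(\cB-\cA)$ of Theorem \ref{thm:classdege}, the divisor $\delta_0^{\mathrm{ram}}$ enters \emph{only} through the term $\tfrac14\sigma^*(\delta_0^{\mathrm{ram}})$ coming from $R^1f_*(\P)$, whereas $\delta_0'$ also occurs among the degree-two Grothendieck--Riemann--Roch contributions; analysing the restriction of $\chi\colon\cA\to\cB$ along $\sigma^*(\Delta_0^{\mathrm{ram}})$ should show that for each component the multiplicity of $\delta_0^{\mathrm{ram}}$ is at most twice that of $\delta_0'$, which is far stronger than $4b_0^{\mathrm{ram}}\le 17b_0'$. (Alternatively, one could try to produce a second pencil of Prym curves on a general Nikulin surface, in a polarization class differing from $|C|$ by a multiple of $\mathfrak e$, which is again disjoint from $\dd_9$ and has intersection numbers with $(\lambda,\delta_0',\delta_0^{\mathrm{ram}})$ not proportional to those of $\Xi_9$; a second linear relation of this kind would confine $(a,b_0',b_0^{\mathrm{ram}})$ to a single ray on which $a\le 9b_0'$ is manifest.) Once $a\le 9b_0'$ is established, $R\cdot\ddd\ge 0$ follows for every component $\ddd$ of $\dd_9$; combined with $R\cdot K_{\rr_9}=-1<0$ from Corollary \ref{cor:inters} and the fact that the curves $R$ sweep out $\ddd$, the theorem of \cite{BDPP} then shows that $K_{\rr_9}$ is not pseudo-effective, hence $\rr_9$ is uniruled, which is Theorem \ref{thm:main}.
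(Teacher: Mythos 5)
Your reduction is correct and coincides with the paper's: using $\Xi_9\cdot\ddd=0$ to get $10a=56b_0'+8b_0^{\mathrm{ram}}$ and then eliminating $b_0^{\mathrm{ram}}$ from $R\cdot\ddd=9a-47b_0'-8b_0^{\mathrm{ram}}$ to obtain $R\cdot\ddd=9b_0'-a$ is exactly the paper's computation, and you correctly identify that everything hinges on the single inequality $a\le 9b_0'$. But that inequality is precisely where your argument stops being a proof. Neither of your two suggested routes is carried out, and the first one does not work as described: the class $c_1(\cB-\cA)$ of Theorem \ref{thm:classdege} is a statement about the \emph{total} degeneracy class $[\widetilde Z]$, so observing that $\delta_0^{\mathrm{ram}}$ enters only through $\tfrac14\sigma^*(\delta_0^{\mathrm{ram}})$ gives no control over how the coefficients distribute among the individual components of $\dd_9$ --- which is the whole difficulty, as you yourself note when pointing out that a component with $b_0'=0$, $b_0^{\mathrm{ram}}>0$ would be fatal. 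The second suggestion (a second Nikulin pencil in a twisted polarization class) is left entirely hypothetical.

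The ingredient the paper actually uses is different and you have missed it: the curve $A\subseteq\rr_9$ obtained by lifting a Lefschetz pencil $\{C_t\}$ on a general (non-Nikulin) $K3$ surface of genus $9$ to \emph{all} of its $2$-torsion points $\eta_t\in\overline{\mathrm{Pic}}^0(C_t)[2]$. Since a general genus $9$ curve lies on a $K3$ surface, $A$ is a sweeping curve of $\rr_9$ itself, hence $A\cdot\ddd\ge 0$ for \emph{every} irreducible effective divisor, in particular for every component of $\dd_9$. Plugging in the intersection numbers $A\cdot\lambda=(g+1)(2^{2g}-1)$, $A\cdot\delta_0'=(6g+18)(2^{2g-1}-2)$, $A\cdot\delta_0''=6g+18$, $A\cdot\delta_0^{\mathrm{ram}}=(6g+18)2^{2g-2}$ with $g=9$, using $b_0''\ge 0$ (itself proved via a sweeping curve of $\Delta_0''$) and eliminating $b_0^{\mathrm{ram}}$ via the Nikulin relation yields $5a\le 36b_0'$, whence $9b_0'-a\ge\bigl(\tfrac{45}{36}-1\bigr)a\ge 0$ since $a\ge 0$ ($\lambda$ being big and nef). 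Without this second linear inequality, or some equally effective per-component bound, your argument is incomplete.
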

\begin{proof} We consider an irreducible component $\ddd$ of $\dd_9$ and write
$$[\ddd]=a\lambda-b_0'\delta_0'-b_0''\delta_0''-b_0^{\mathrm{ram}}\delta_0^{\mathrm{ram}}-\sum_{i=1}^4\bigl(b_i\delta_i+b_{9-i}\delta_{9-i}+b_{i:9-i}
\delta_{i:9-i}\bigr) \in \mbox{Pic}(\rr_9).$$
We may clearly assume that $\ddd\neq \Delta_0''$. We apply Theorem \ref{thm:nikulin} and observe that $\ddd$ is disjoint from a general pencil $\Xi_9$ of Prym curves on a general Nikulin surface. Using (\ref{eq:Nikulin_pencil}) we obtain that
\begin{equation}\label{eq:nikulin4}
10a-56b_0'-8b_0^{\mathrm{ram}}=0.
\end{equation}
Note that $b_0''\geq 0$. Indeed, one considers the following sweeping curve $A_0''$ of the divisor $\Delta_0''$. Fix a general curve $[C,p]\in \cM_{8,1}$ and consider the family of Prym curves
$$A_0'':=\Bigl\{\bigl[C_{xp}= C/x\sim p, \ \eta_{xp}\bigr]: x\in C, \ \eta_{xp}\in \mbox{Pic}^0(C_{xp})[2], \ \  \nu^*(\eta_{xp})\cong \OO_C\Bigr\}.$$
Here $\nu\colon C\rightarrow C_{xp}$ denotes the normalization. Then $A_0''\cdot \ddd=(2g-2)b_0''-b_1\geq 0$. Since clearly $b_1\geq 0$ (use that the intersection of $\ddd$ with the test curve given by a ruling of the boundary divisor $\Delta_1\subseteq \rr_g$ is non-negative), one concludes that $b_0''\geq 0$, as claimed.

We next use the lift to $\rr_9$ of a general pencil of curves of genus $9$ on a fixed $K3$ surface  under the map $\pi\colon \rr_9\rightarrow \mm_9$. Given a general $K3$ surface $S$ with $\mbox{Pic}(S)=\mathbb Z\cdot [C]$  where $C^2=16$, we take a Lefschetz pencil
$\bigl\{C_t\bigr\}_{t\in \PP^1}$ in the linear system $\bigl|\OO_S(C)\bigr|$, then consider the curve
$$\begin{tikzcd}[column sep=22pt]
A:=\Bigl\{[C_t, \eta_t]: \eta_t\in \overline{\mathrm{Pic}}^0(C_t)[2], \ \ t\in \PP^1\Bigr\}\subseteq \rr_9.
\end{tikzcd}
$$
We record the intersection numbers of $A$ with the generators of $\mbox{Pic}(\rr_9)$, cf. \cite[Lemma 1.8]{FL}:

$$A\cdot\lambda=(g+1)(2^{2g}-1), \ \  A\cdot \delta_0'=(6g+18)(2^{2g-1}-2), \  A\cdot \delta_0''=6g+18, \  A\cdot \delta_0^{\mathrm{ram}}=(6g+18)2^{2g-2},$$
where $g=9$. The intersection numbers of $A$ with the remaining generators of $\mbox{Pic}(\rr_9)$ are all zero. Clearly
$A$ is a sweeping curve in $\rr_9$, therefore it intersects every effective divisor in $\rr_9$ non-negatively. In particular, combining the relation (\ref{eq:nikulin4}) with the inequality $A\cdot \ddd\geq 0$, we obtain that $5a\leq 36b_0'$. But then using (\ref{thm:int_numbers3}) coupled again with (\ref{eq:nikulin4}), we write
$$R\cdot \ddd=9a-47b_0'-8b_0^{\mathrm{ram}}=9a-47b_0'+56b_0'-10a=9b_0'-a\geq \Bigl(\frac{45}{36}-1\Bigr) a \geq 0,$$
since clearly $a\geq 0$ (the class $\lambda$ is big and nef), which brings the proof to an end.
\end{proof}

\subsection{The existence of a good sweeping rational curve inside $\dd_9$}\label{subsec:trans}  We are left with establishing the assumption (\dag), that plays a crucial role in the proof of both Theorems \ref{thm:int_numbers3} and \ref{thm:anycomponent}. Recall that $Z$ was defined as the degeneracy locus
inside $\mathcal{RG}^2_8$ of the morphism $\chi$ considered in (\ref{eq:sheaf_morphism}). Although not strictly needed in the proof, note that it follows from \cite[Proposition 3.4]{Br} and \cite[Theorem 2.3]{FL} that $\mathcal{RG}^2_8$ is irreducible.

\vskip 4pt

Keeping the notation from the beginning of Section \ref{sec:uniruledd}, we are going to exhibit a point $[C,L,\eta]\in Z$, inducing a plane octic
$\varphi_L\colon C\rightarrow \Gamma$ such that:

\begin{enumerate}
\item $\Gamma$ is nodal at $12$ points $o_1, \ldots, o_{12}$ and has no further singularities.
\item If $\overline{L}\otimes \eta\cong \OO_C(y_1+\cdots+y_8)$, the points $y_1, \ldots, y_8$ are pairwise distinct and disjoint from the set
$\bigl\{\varphi_L^{-1}(o_1), \ldots, \varphi_L^{-1}(o_{12})\bigr\}$.
\item Each curve in the pencil spanned by $\Gamma$ and $\Gamma'$ is irreducible and nodal.
\end{enumerate}

Since each of the conditions (1)-(3) is open in $Z\subseteq \mathcal{RG}^2_8$ and each component of $Z$ maps generically finite under the map $\sigma$ onto a component of $\dd_9$, exhibiting \emph{one} such point $[C, L, \eta]$, implies the existence of a component $\ddd$ of $\dd_9$, for which Theorem \ref{thm:anycomponent} can be applied.

We begin with the following observation:
\begin{lemma}\label{lemma:red}
Fix a general point $\xi \in \mathrm{Hilb}^{12}(\PP^2)$. Then the locus of reducible curves in the linear system $\bigl|8h-2E_1-\cdots-2E_{12}\bigr|$ on $X$ has codimension at least $4$.
\end{lemma}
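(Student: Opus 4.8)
The plan is to estimate the dimension of the locus of reducible curves $C\in \bigl|8h-2E_1-\cdots-2E_{12}\bigr|$ on $X=\mathrm{Bl}_\xi(\PP^2)$ by stratifying according to the possible splitting types. Write $h, E_1, \ldots, E_{12}$ for the standard generators of $\mathrm{Pic}(X)$ and note that $\dim \bigl|8h-2E_1-\cdots-2E_{12}\bigr|=8$, since $\xi$ is general. If $C=C_1+C_2$ with $C_i\in |D_i|$ effective, then $D_1+D_2 = 8h-2E_1-\cdots-2E_{12}$. I would first argue that, because $\xi$ consists of $12$ general points, each $D_i$ must be of the form $D_i=d_ih-\sum_j m_{i,j}E_j$ with $d_1+d_2=8$, all $m_{i,j}\ge 0$, $m_{1,j}+m_{2,j}=2$, and $d_i^2 - \sum_j m_{i,j}^2 \ge -?$ appropriate effectivity bounds; moreover for $D_i$ to have sections we need $d_i\ge 1$ (a component supported only on exceptional curves is impossible since the $E_j$ are disjoint and $C$ is irreducible-free of such, or such a component forces $m_{i,j}=2$ for that $j$ contributing $E_j$-components). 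For a general plane-points configuration the dimension of $|d_ih-\sum_j m_{i,j}E_j|$ is $\binom{d_i+2}{2}-1-\sum_j \binom{m_{i,j}+1}{2}$ whenever this is nonnegative (and the system is empty otherwise), by the generality of $\xi$ and the standard fact (e.g.\ from the theory of fat points, or directly since $12$ general points impose independent conditions on plane curves of degree $\le 8$ here — one can check $\binom{d+2}{2} > $ number of simple/double point conditions in all relevant ranges, or invoke that a general union of double points imposes independent conditions on $\OO_{\PP^2}(d)$ for $d\le 8$, which holds by Alexander–Hirschowitz). The locus of reducible curves of a fixed splitting type $(D_1,D_2)$ inside $\bigl|8h-2E_1-\cdots-2E_{12}\bigr|$ then has dimension $\dim|D_1| + \dim|D_2|$ (or $\dim|D_1|+\dim|D_2|+1$ if one allows $D_1=D_2$, which cannot occur here since $8$ is even but the $E_j$ multiplicities are odd-sum-to-$2$, and in any case contributes no more), and I want to show $\dim|D_1|+\dim|D_2| \le 4$, i.e.\ codimension $\ge 4$ in the $8$-dimensional system.

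The key computation is therefore: over all admissible splittings $d_1+d_2=8$, $d_i\ge 1$, and $\varepsilon_{1,j}+\varepsilon_{2,j}=2$ with $\varepsilon_{i,j}\in\{0,1,2\}$, maximize
\[
\Bigl(\binom{d_1+2}{2}-1-\sum_j\binom{\varepsilon_{1,j}+1}{2}\Bigr)_+ + \Bigl(\binom{d_2+2}{2}-1-\sum_j\binom{\varepsilon_{2,j}+1}{2}\Bigr)_+.
\]
Note $\sum_j\bigl(\binom{\varepsilon_{1,j}+1}{2}+\binom{\varepsilon_{2,j}+1}{2}\bigr)$ is minimized, for each $j$, by taking $\{\varepsilon_{1,j},\varepsilon_{2,j}\}=\{1,1\}$, giving $1+1=2$; taking $\{0,2\}$ gives $0+3=3$. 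So to make the sum of the two virtual dimensions as large as possible one takes $\varepsilon_{1,j}=\varepsilon_{2,j}=1$ for all $j$ (when both $d_i$ are large enough to support multiplicity $1$ at all $12$ points, i.e.\ $d_i\ge$ roughly $3$), giving total $\binom{d_1+2}{2}+\binom{d_2+2}{2}-2-24$. For $d_1+d_2=8$ the quantity $\binom{d_1+2}{2}+\binom{d_2+2}{2}$ is maximized at the extreme split, but then one $d_i$ is too small to carry twelve general double... one general points; the honest thing is to just tabulate the finitely many cases $d_1\in\{1,2,3,4\}$ (with $d_2=8-d_1$), in each case choosing the optimal distribution of the $\varepsilon_{i,j}$ subject to each system being nonnegative-dimensional, and check the maximum of the sum is $4$. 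I expect the extremal case to be $d_1=4, d_2=4$ with $\varepsilon_{1,j}=\varepsilon_{2,j}=1$ for all $j$: each factor is $\binom{6}{2}-1-12=2$, total $4$, matching the known fibration structure in \eqref{eq:P1_fibration}. The cases $d_1=1,2,3$ I expect to give strictly smaller totals once the forced vanishing (many $\varepsilon_{i,j}$ must be $2$ on the low-degree component, or that component is forced to be empty) is taken into account; e.g.\ $d_1=1$ forces $\varepsilon_{1,j}\le 1$ with at most $2$ of them equal to $1$, so $\dim|D_1|\le 0$, and $\dim|D_2|$ is then cut down correspondingly.

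The main obstacle, and the step requiring care rather than routine bookkeeping, is justifying that $\dim|d_ih-\sum_j\varepsilon_{i,j}E_j|$ equals the expected virtual dimension $\binom{d_i+2}{2}-1-\sum_j\binom{\varepsilon_{i,j}+1}{2}$ (when this is $\ge 0$) for $\xi$ general — i.e.\ that $12$ general fat points (with multiplicities $1$ or $2$) impose independent conditions on plane curves of the relevant degrees $\le 8$. For multiplicity-one conditions this is elementary; for the double points one invokes the Alexander–Hirschowitz theorem (no special systems among double points in these degrees — the only exceptions occur for $\le 9$ double points in low degree or the well-known list, none of which is $12$ double points with $d\le 8$ except one must check $d=4$ with $12$ double points is overdetermined, i.e.\ the system is empty, which is consistent with what we need). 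Since we only ever need an \emph{upper} bound on $\dim|D_i|$, even the crude bound $\dim|D_i|\le\binom{d_i+2}{2}-1-(\text{number of simple conditions})$ suffices in most strata and avoids invoking Alexander–Hirschowitz in full; only the balanced case $d_1=d_2=4$, $\varepsilon_{i,j}\equiv 1$ needs the sharp statement that the system has dimension exactly $2$, which in fact follows directly from the geometry already set up in Section \ref{sec:uniruledd} (it is the linear system $|\overline L|$ and its translates, whose dimension is known). Once the case-by-case maximum $4$ is confirmed, the locus of reducible curves has codimension $\ge 8-4=4$ in $\bigl|8h-2E_1-\cdots-2E_{12}\bigr|$, proving the lemma.
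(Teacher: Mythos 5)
Your proposal is correct and is essentially the paper's argument: both stratify the reducible locus by the splitting type $(d_1h-\sum_j m_{1,j}E_j)+(d_2h-\sum_j m_{2,j}E_j)$, use the generality of the $12$ points to bound the dimension of each factor (the paper simply notes that no cubic passes through all twelve points and no septic is nodal at all of them), and identify the extremal stratum as the balanced one $C_1,C_2\in|4h-E_1-\cdots-E_{12}|$, contributing $2+2=4$ parameters against $\dim|8h-2E_1-\cdots-2E_{12}|=8$. Your version merely carries out the case-by-case bookkeeping (and the appeal to independence of conditions for general fat points) that the paper leaves implicit.
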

\begin{proof}
Assume $C=C_1+C_2$ is such a reducible curve. Since the points $o_1, \ldots, o_{12}\in \PP^2$ are general, there is no cubic passing through all of them, nor a plane septic curve nodal at each of these points.  Therefore the possibility yielding the largest number of moduli is when  both $C_1$ and $C_2$ are elements of the linear system $\bigl|4h-E_1-\ldots-E_{12}|$. But such curves depend on at most $2\cdot \mbox{dim } \bigl|4h-E_1-\cdots-E_{12}\bigr|=4$ parameters.
\end{proof}

\begin{lemma}\label{lemma:no_triple}
There exists a point $[C, L, \eta]\in Z$, such that no curve in the pencil spanned by $C$ and $C'$ has either cusps or singularities of order at least $3$.
\end{lemma}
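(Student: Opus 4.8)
The plan is to exhibit a sufficiently general point $[C,L,\eta]\in Z$ and show that the associated pencil $\{\Gamma_t\}_{t\in\PP^1}$ of plane octics (equivalently the pencil $\{C_t\}_{t\in\PP^1}$ of curves in $|8h-2E_1-\cdots-2E_{12}|$ on $X$) contains no member with a cusp or a point of multiplicity $\geq 3$. First I would reduce to a dimension count: the locus in $Z$ of those $[C,L,\eta]$ for which \emph{some} member $\Gamma_t$ acquires a cusp or a triple point is a constructible subset, so it suffices to bound its dimension below that of (each component of) $Z$, i.e. below $\dim\dd_9 = \dim\cR_9 - 1 = 3\cdot 9 - 3 - 1 = 23$. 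Since $Z$ dominates $\dd_9$ under $\sigma$ with generically finite fibers, the relevant comparison is really between the "bad locus" inside $Z$ and $23$.

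Next I would set up the incidence variety. Recall that, after fixing $\xi=o_1+\cdots+o_{12}\in\mathrm{Hilb}^{12}(\PP^2)$, the pencil $\{C_t\}$ is spanned by $C$ and $C'$, both living in $\P_\xi := |8h-2E_1-\cdots-2E_{12}|\cong\PP^8$; so the data of the pencil is a point of $\mathrm{Hilb}^{12}(\PP^2)$ together with a line in $\PP^8$ (a $\mathrm{Gr}(2,9)$-worth of pencils per $\xi$), cut down by the condition that the two spanning octics be tangent along $8$ points $y_1,\dots,y_8$ (which is the condition $C\cdot C'=2y_1+\cdots+2y_8$ defining $\mathcal Y$). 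I would then estimate, for a \emph{fixed} generic pencil spanned by a generic $C$ and a generic tangent partner $C'$, that requiring a member $\Gamma_t$ to have a cusp is $2$ conditions and requiring a triple point is $3$ conditions on the already-cut-out family, while the location of the singular point sweeps a curve in $X$ and $t$ moves in $\PP^1$; combining these, the sublocus of $\mathcal Y\dblq SL(3)$ whose pencil contains a cuspidal or tritangent-type member has codimension $\geq 1$ in $\mathcal Y\dblq SL(3)$, hence its image under $\upsilon_1$ has codimension $\geq 1$ in $\dd_9$ — but since we only need one good point in each component of $\dd_9$, what I actually want is that this bad sublocus is a \emph{proper} subvariety of $\mathcal Y\dblq SL(3)$, equivalently that the generic pencil through a generic point of each component of $\dd_9$ is free of such singular members. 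Here Lemma~\ref{lemma:red} already handles reducible members (codimension $\geq 4$), so I may restrict attention to pencils of irreducible octics and use that a generic irreducible member is itself nodal.

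The cleanest route to "one good point suffices" is to build an explicit example directly from the Hilbert–Burch description: take $a,b,c,a',b',c'\in H^0(\PP^2,\OO_{\PP^2}(2))$ generic, so that $\xi$ is general and the quartics $|4h-E_1-\cdots-E_{12}|$ are spanned by the $2\times 2$ minors; then choose $C$ a general member of $|8h-2E_1-\cdots-2E_{12}|$, observe (using Petri-generality of the normalization $C\to\Gamma$, which holds on a dense open set of $\mathcal{RG}^2_8$) that $\overline L{}^{\otimes 2}$ is very ample enough for the pair $C,\eta$ with $\overline L\otimes\eta=\OO_C(y_1+\cdots+y_8)$ to produce a tangent partner $C'$ with $y_1,\dots,y_8$ distinct and off the nodes — this is exactly conditions (1),(2) of the list — and then note that for $C,C'$ generic subject to these constraints the pencil $\{tC+sC'\}$ has no cusp or triple point because acquiring one of those is a strictly positive-codimension condition on the $\PP^8\times\mathrm{Hilb}^{12}$ of octics with nodes at $\xi$, and such conditions are not forced by tangency along $8$ general points. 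I expect the main obstacle to be precisely the interaction between the tangency constraint (which pins down $8$ of the intersection points of $C$ and $C'$) and the genericity needed to avoid worse singularities: one must check that imposing $C\cdot C'=2y_1+\cdots+2y_8$ does not already force, say, the line joining two of the $y_i$ through a base point, or a node of a pencil member to collide into a cusp; I would control this by a semicontinuity/deformation argument, degenerating if necessary to a configuration (e.g. $\Gamma$ a union of two general nodal quartics through $\xi$ handled via Lemma~\ref{lemma:red}'s complement) where the statement is transparent and then spreading out. Once the cusp/triple-point–free locus in $Z$ is shown nonempty, openness of conditions (1)–(3) in $Z\subseteq\mathcal{RG}^2_8$ gives a component $\ddd$ of $\dd_9$ on which assumption $(\dag)$ holds, completing the step.
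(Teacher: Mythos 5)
Your first route --- bounding the codimension of the ``bad'' locus inside $\mathcal{Y}\dblq SL(3)$ by counting conditions --- has a genuine gap, and you have in fact put your finger on it yourself: the count (a cusp is so many conditions, a triple point so many more) is carried out in the ambient space $\mathrm{Hilb}^{12}(\PP^2)\times\PP^8$ of octics nodal at $\xi$, whereas the pencils you must control live on the subvariety $\mathcal{Y}$, which is already cut out by the $8$ tangency conditions $C\cdot C'=2y_1+\cdots+2y_8$. Nothing in a naive count rules out that these tangency conditions \emph{force} a cusp or a triple point on some member of every such pencil, at least over some component of $\dd_9$; ruling that out is precisely the content of the lemma, so the dimension count cannot serve as the proof. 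Your fallback --- degenerate to a transparent configuration and spread out by openness --- is the right idea and is what the paper actually does, but the configuration you name (a single octic that is a union of two quartics through $\xi$) does not yet specify a pencil, and the feature that makes the paper's example work is absent from your description.

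The paper's degeneration is the following. Fix a general $\xi=o_1+\cdots+o_{12}$, a general \emph{smooth} quartic $D_0\in\bigl|4h-E_1-\cdots-E_{12}\bigr|$, and a general pencil $\Lambda=\{D_t\}_{t\in\PP^1}$ in that same (two-dimensional) linear system, chosen so that each $D_t$ is irreducible and at most $1$-nodal and meets $D_0$ transversally. The pencil of reducible octics $\{D_0+D_t\}_{t\in\PP^1}$ then has \emph{every} member equal to a transverse union of two such quartics, so no member has a cusp or a point of multiplicity at least $3$. Moreover, for any $8$ general points $y_1,\ldots,y_8\in D_0$, two members $D_0+D_{t_1}$ and $D_0+D_{t_2}$ meet non-transversally at each $y_i$ --- they share the component $D_0$ --- so $\bigl(\xi,\,D_0+D_{t_1},\,D_0+D_{t_2}\bigr)$ is a point of $\mathcal{Y}$ for which the spanned pencil is visibly free of cusps and triple points; openness of that condition then produces an honest point $[C,L,\eta]\in Z$ as the lemma requires. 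The common base component $D_0$ does double duty: it makes membership in $\mathcal{Y}$ automatic \emph{and} it keeps every member of the pencil free of bad singularities. That is the ingredient your sketch would need to supply to close the argument.
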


\begin{proof} We consider a general point $\xi=o_1+ \cdots+ o_{12}\in \mbox{Hilb}^{12}(\PP^2)$ such that the quadruple cover $q\colon X\rightarrow \PP^1$ considered in (\ref{eq:degree4}) has a branch curve with only nodes and cusps as singularities. We choose a general element
$$D_0\in \bigl|4h-E_1-\cdots-E_{12}\bigr|,$$
that is, $D_0$ is a smooth quartic curve passing through $o_1, \ldots, o_{12}$. Further, we take a general pencil of quartics $\Lambda:=\bigl\{D_t\}_{t\in \PP^1}$ in the linear system $\bigl|4h-E_1-\cdots-E_{12}\bigr|$, then consider the pencil of \emph{reducible} octics in $X$
$$\bigl\{C_t=D_0+D_t\subseteq X: t\in \PP^1\bigr\}$$
having $D_0$ as a base component. We may assume that each curve $D_t$ is irreducible and at most $1$-nodal and that the intersection of $D_0$ with two generators of the pencil $\Lambda$ is everywhere transverse. It thus follows that no curve $D_0+D_t$ in $X$ has cusps or singularities of order at least $3$. Observe however, that in this pencil there are $12$ curves with a tacnode, corresponding to the situation when the plane quartics $\epsilon(D_0)$ and $\epsilon(D_{t_i})$ have a common tangent at the points $o_i$, for $i=1, \ldots, 12$. Note furthermore that we can pick \emph{general} points $y_1, \ldots, y_8\in D_0$ and then with the notation of (\ref{eq:P1_fibration}), observe that the curves $D_0+D_{t_1}$ and $D_0+D_{t_2}$ obviously have non-transverse intersection at each of the points $y_1, \ldots, y_8$ (in fact they even have a common component), that is, the point $\bigl(\xi, D_0+D_{t_1}, D_0+D_{t_2}\bigr)$ can be regarded as an element of $\mathcal{Y}$.
\end{proof}

\vskip 4pt

\begin{lemma}\label{lemma:no_tacnode}
There exists a point $[C, L, \eta]\in Z$, such that no curve in the pencil spanned by $C$ and $C'$ has tacnodal singularities.
\end{lemma}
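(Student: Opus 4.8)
The plan is to build on Lemma~\ref{lemma:no_triple} by deforming the degenerate configuration $D_0+D_t$ there into an honest pencil of irreducible octics, while retaining enough control over singularities to rule out tacnodes. Concretely, I would start from the reducible pencil $\{D_0+D_t\}_{t\in\PP^1}$ produced in Lemma~\ref{lemma:no_triple}, fix general points $y_1,\dots,y_8\in D_0$, and then consider the $2$-dimensional linear system on $X$ obtained by adding to the pencil $\langle C, C'\rangle$ (with $C=D_0+D_{t_1}$, $C'=D_0+D_{t_2}$) a general direction inside $|8h-2E_1-\cdots-2E_{12}|$ consistent with the constraint $C\cdot C'=2y_1+\cdots+2y_8$ — equivalently, I would move inside the fibre of $\mathcal{Y}\dblq SL(3)\to \dd_9$ over a general deformation of the (degenerate) point $\upsilon_1(\xi,D_0+D_{t_1},D_0+D_{t_2})$. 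Since by Lemma~\ref{lemma:red} the reducible locus in $|8h-2E_1-\cdots-2E_{12}|$ has codimension $\geq 4$, a general pencil through a general point of this linear system meets the reducible locus in at most a proper closed subset, hence (after a small perturbation) in no point at all; so I may assume every $C_t$ in the resulting pencil $R$ is irreducible.

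The remaining task is to show that, among such irreducible pencils $R$ through a suitable $[C,L,\eta]\in Z$, one can be chosen with no tacnodal member. Here I would run a dimension count on the incidence variety of pencils with a tacnodal member. The space of plane octics with a tacnode (and passing through the $12$ general nodes $o_i$ with the prescribed multiplicities and through a fixed general point with a fixed tangent) is of known, and sufficiently small, dimension: imposing a tacnode is three conditions, so in the $\PP^8=|8h-2E_1-\cdots-2E_{12}|$ the tacnodal locus has codimension $3$, i.e. dimension $5$; a pencil is a line in this $\PP^8$, so the lines meeting a codimension-$3$ subvariety form a family of codimension $3-1=2$ in the Grassmannian $\mathrm{Gr}(2,9)$ of lines, which is of dimension $14$. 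Since the pencils $R$ arising from points of $Z$ (with the base-point constraint along $y_1,\dots,y_8$) already sweep out a positive-dimensional family — in fact each irreducible component of $Z$ contributes a family of dimension equal to that component, mapping dominantly onto the corresponding component of $\dd_9$ — a generic member avoids the tacnodal lines, provided the family of pencils we consider is not entirely contained in the tacnodal incidence locus. That non-containment is exactly what the explicit degenerate model of Lemma~\ref{lemma:no_triple} is for: I would show that by choosing $y_1,\dots,y_8\in D_0$ and $t_1,t_2$ generically, and then smoothing, the $12$ tacnodes of the reducible pencil (coming from common tangents at the $o_i$) either disappear or move off, so the generic deformation has none; more precisely, I would exhibit one irreducible pencil $R$, obtained as a general small deformation of $D_0+D_{t_1},\,D_0+D_{t_2}$ inside $\mathcal{Y}$, all of whose members are irreducible with only nodes.

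The technical heart, and the step I expect to be the main obstacle, is controlling the twelve ``boundary'' singularities that sit over the nodes $o_1,\dots,o_{12}$ of $\Gamma$. Each curve in the pencil passes through every $o_i$ with multiplicity $2$, so a priori a generic member already has nodes there; what must be excluded is that, for some $t$, the two local branches at $o_i$ become tangent (a tacnode) or that three branches collide (an ordinary triple point, already handled in Lemma~\ref{lemma:no_triple}) or worse. In the reducible model of Lemma~\ref{lemma:no_triple} the branches at $o_i$ are the branches of $\epsilon(D_0)$ and $\epsilon(D_t)$, and these become tangent for exactly $12$ values of $t$; I would argue that upon a general deformation of the pencil inside $|8h-2E_1-\cdots-2E_{12}|$ (as opposed to staying inside the reducible sub-locus) these $12$ tacnodes deform to pairs of nearby nodes, by a versality/transversality argument applied fibrewise to the local-to-global map $H^0(X,\OO_X(C))\to \prod_{i=1}^{12}\OO_{2o_i}$ composed with the jet-comparison at the $o_i$. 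Verifying that this deformation is unobstructed — equivalently, that the linear system $|8h-2E_1-\cdots-2E_{12}|$ separates the relevant second-order jets at the twelve general points — is the delicate point, and I would settle it by a direct Riemann–Roch computation on $X$ together with the generality of $\xi$, invoking that $8h-2E_1-\cdots-2E_{12}$ is $2$-very ample away from a locus of large codimension. Granting this, a generic pencil $R$ in our family has only nodal, irreducible members, establishing the claim; combined with Lemmas~\ref{lemma:red}, \ref{lemma:no_triple} and the openness of conditions (1)--(3), this produces the point $[C,L,\eta]\in Z$ required to verify assumption $(\dag)$ on at least one irreducible component $\ddd$ of $\dd_9$.
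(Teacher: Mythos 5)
Your proposal takes a genuinely different route from the paper, and as written it has a gap at exactly the point you yourself flag as the ``technical heart''. For contrast: the paper does not deform the reducible model of Lemma~\ref{lemma:no_triple} at all. It produces a second explicit degenerate point of $\mathcal{Y}$ by pulling back a pencil of conics under the degree $4$ cover $q\colon X\rightarrow \PP^2$ of (\ref{eq:degree4}): one fixes tangency clusters $\xi_a, \xi_b\in \mathrm{Hilb}^2(\PP^2)$ and takes the pencil $\bigl\{q^*(D): D\in |\mathcal{I}_{\xi_a+\xi_b}(2)|\bigr\}$. These pullbacks lie in $\bigl|8h-2E_1-\cdots-2E_{12}\bigr|$, any two of them are tangent at the $8$ points of $q^*(\xi_a)+q^*(\xi_b)$ (so the triples lie in $\mathcal{Y}$), and since no conic in the pencil is tacnodal and the branch curve of $q$ has only nodes and cusps, no member of the pulled-back pencil acquires a tacnode. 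The problem you spend most of your effort on is thereby avoided entirely.

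The first concrete gap is that the dimension count is not the right tool here, because the pencils parametrized by $\mathcal{Y}$ are very far from general lines in $\PP^8=\bigl|8h-2E_1-\cdots-2E_{12}\bigr|$. Every member of such a pencil already has an assigned double point at each $o_i$, so the locus of members with a tacnode at some $o_i$ has codimension $2$ in $\PP^8$ (not $3$), and the locus with a degenerate tangent cone at some $o_i$ has codimension $1$ --- a locus that a \emph{general} line in $\PP^8$ cannot avoid. Worse, the pencils in $\mathcal{Y}$ have eight tangency base points $y_1,\ldots,y_8$ at which exactly one member is forced to be singular, and whether that forced singularity is a node or a tacnode is a question about the family $\mathcal{Y}$ itself that no Grassmannian count in $\PP^8$ sees. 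So the Schubert-type computation carries no weight, and the entire content of the lemma is pushed into the non-containment statement.

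The second gap is that the non-containment is not delivered by the degenerate model, precisely because the reducible pencil $\{D_0+D_t\}$ \emph{does} have $12$ tacnodal members. To conclude that the generic pencil in the relevant component of $\mathcal{Y}$ has none, you would need an actual first-order computation showing that the tacnode condition is not identically satisfied on that component; this must be a deformation of the pencil \emph{inside} $\mathcal{Y}$ (preserving the $8$-fold tangency of the two generators), carried out at the highly degenerate point $(\xi, D_0+D_{t_1}, D_0+D_{t_2})$, where the local structure of $\mathcal{Y}$ --- smoothness, which branches pass through it, whether it even lies in the closure of the component one cares about --- is not established. The $2$-very-ampleness of $8h-2E_1-\cdots-2E_{12}$ that you invoke controls unconstrained deformations of a single octic, not deformations of the constrained pencil, and it says nothing about the forced singular members at the $y_j$. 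As it stands, the steps ``I would argue that the $12$ tacnodes deform to pairs of nearby nodes'' and ``granting this'' are the lemma, not a proof of it. If you want a proof along these lines you must either carry out that versality computation within $\mathcal{Y}$, or do what the paper does and write down one explicit member of $\mathcal{Y}$ with no tacnodal fibres.
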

\begin{proof} We keep the notation of Lemma \ref{lemma:no_triple} and assume that we chosen $o_1+\cdots+o_{12}$ such that the quadruple cover $q\colon X\rightarrow \PP^2$ has a branch curve $\Delta\subseteq \PP^2$ with only nodes and cusps as singularities. We consider two points $\xi_a, \xi_b\in \mbox{Hilb}^2(\PP^2)$ corresponding to distinct points $a$ and $b$ and tangent directions $\xi_a\in \PP\bigl(T_a(\PP^2)\bigr)$ at $a$ and $\xi_b\in \PP\bigl(T_b(\PP^2)\bigr)$ at $b$ respectively. We denote by $\Lambda:=\bigl|\mathcal{I}_{\xi_a+\xi_b}(2)\bigr|$ the pencil of conics passing through the cluster $\xi_a+\xi_b$ in $\PP^2$ and the family of octics
\begin{equation}\label{eq:no_tacnode}
\bigl\{q^*(D): D\in \Lambda\bigr\}.
\end{equation}
Observe that for distinct elements $D,D'\in \Lambda$, the curves $q^*(D)$ and $q^*(D')$ are mutually tangent at the $8$ points in $q^*(\xi_a)$ and $q^*(\xi_b)$, therefore $\bigl(o_1+\cdots+o_{12}, q^*(D), q^*(D')\bigr)\in \mathcal{Y}$. Furthermore, no conic $D$ in the pencil $\Lambda$ is tacnodal, implying also that no curve in (\ref{eq:no_tacnode}) has a tacnode. This last claim can also be easily verified by picking $\xi\in \mbox{Hilb}^{12}(\PP^2)$ generically via (\ref{eq:hilbert_burch}). This finishes the proof.
\end{proof}

\vskip 6pt

\noindent{\emph{Proof of Theorem \ref{thm:main}.}} Applying Lemmas \ref{lemma:red}, \ref{lemma:no_triple} and \ref{lemma:no_tacnode}, we conclude that there exists an irreducible component $\ddd$ of the divisor $\dd_9$ such that the assumption $(\dag)$ is satisfied for the sweeping curve $R\subseteq \ddd$, in particular Theorems \ref{thm:int_numbers3} and \ref{thm:anycomponent} can be applied. The conclusion follows as described in the Introduction. The canonical class $K_{\rr_9}$ is not pseudo-effective. Indeed, otherwise we write $K_{\rr_9}\equiv a\cdot \ddd+M$, where $a\geq 0$ and $M$ is a pseudo-effective $\mathbb R$-divisor class on $\rr_9$ not containing $\ddd$ in its support, therefore $R\cdot M\geq 0$. It follows that $0>R\cdot K_{\rr_9}=aR\cdot \ddd+ R \cdot M\geq 0$, a contradiction. Applying \cite{BDPP}, it follows that $\rr_9$ is uniruled.

\hfill $\Box$

\bibliographystyle{amsplain}

\begin{thebibliography}{10}


\bibitem{ACGH} E. Arbarello, M. Cornalba, P. Griffiths and J. Harris, {\em{Geometry
of algebraic curves}}, Grundlehren der mathematischen Wissenschaften
267, Springer.

\bibitem{BCF} E. Ballico, C. Casagrande and C. Fontanari,
{\em{Moduli of Prym curves}}, Documenta Mathematica \textbf{9}
(2004), 265--281.

\bibitem{B} A. Beauville, {\em{Prym varieties and the Schottky
problem}}, Inventiones Math. \textbf{41} (1977), 149--96.

\bibitem{BDPP} S. Boucksom, J.P. Demailly, M. P\u{a}un and T. Peternell, {\em{The pseudo-effective cone of a compact K\"ahler manifold and varieties of negative Kodaira dimension}},  Journal of Algebraic Geometry \textbf{22} (2013), 201--248.

\bibitem{Br} G. Bruns, {\em{$\rr_{15}$ is of general type}}, Algebra \& Number Theory \textbf{10} (2016), 1949--1964.

\bibitem{CMGHL} S. Casalaina Martin, S. Grushevsky, K. Hulek and R. Laza, 	{\em{Extending the Prym map to toroidal compactifications of the moduli space of abelian varieties}}
    Journal of the European Math. Society \textbf{19} (2017), 659--723.

\bibitem{Ca} F. Catanese, {\em{On the rationality of certain moduli spaces
related to curves of genus $4$}}, Springer Lecture Notes in Mathematics \textbf{1008} (1983), 30-50.

\bibitem{CEFS} A. Chiodo, D. Eisenbud, G. Farkas and F.-O. Schreyer, {\em{Syzygies of torsion bundles and the geometry of the level $\ell$ modular varieties over $\overline{\mathcal{M}}_g$}}, Inventiones Math. \textbf{194} (2013), 73--118.

\bibitem{Cor} M. Cornalba, {\em{Moduli of curves and theta-characteristics}},
in: Lectures on Riemann surfaces (Trieste, 1987), 560--589.

\bibitem{Dol} I. Dolgachev, {\em{Rationality of $\cR_2$ and $\cR_3$}}, Pure and Applied Math. Quarterly \textbf{4} (2008), 501--508.

%\bibitem[EH1]{EH1} D. Eisenbud and J. Harris, {\em{Limit linear series: Basic theory}}, Inventiones Math. \textbf{85} (1986), 337-371.

\bibitem{Don} R. Donagi, {\em{The unirationality of $\mathcal{A}_5$}},
Annals of Math. \textbf{119} (1984), 269--307.


\bibitem[EH2]{EH2} D. Eisenbud and J. Harros, {\em{Divisors on general curves and cuspidal rational curves}}, Inventiones Math. \textbf{74} (1983), 371--418.

\bibitem{EH} D. Eisenbud and J. Harris, \ {\em{The Kodaira dimension  of the
moduli space of curves of genus $23$}}, Inventiones Math.
\textbf{90} (1987), 359--387.


\bibitem{F1} G. Farkas, {\em{Koszul divisors on moduli spaces of curves}}, American Journal of Math.
\textbf{131} (2009), 819--869.


\bibitem{FJP1} G. Farkas, D. Jensen and S. Payne, {\em{The Kodaira dimension of $\mm_{22}$ and $\mm_{23}$}}, arXiv:2005.00622.

\bibitem{FJP2} G. Farkas, D. Jensen and S. Payne, {\em{The non-abelian Brill-Noether divisor on $\mm_{13}$ and the Kodaira dimension of $\rr_{13}$}}, Geometry \& Topology \textbf{28} (2024), 803--866.

\bibitem{FK} G. Farkas and M. Kemeny, {\em{The generic Green-Lazarsfeld Secant Conjecture}}, Inventiones Math. \textbf{203} (2016), 265--301.

\bibitem{FL} G. Farkas and K. Ludwig, {\em{The Kodaira dimension of the moduli
space of Prym varieties}}, Journal of the European Mathematical Society \textbf{12} (2010), 755--795.

\bibitem{FV1} G. Farkas and A. Verra, {\em{Moduli of theta-characteristics via Nikulin surfaces}}, Mathematische Annalen \textbf{354} (2012), 465--496.

\bibitem{FV4} G. Farkas and A. Verra, {\em{The classification of universal Jacobians over the moduli space of curves}}, Commentarii Math. Helv. \textbf{88} (2013), 587--611.

\bibitem{FV3} G. Farkas and A. Verra, {\em{The geometry of the moduli space of odd spin curves}},  Annals of Math. \textbf{180} (2014), 927--970.

\bibitem{FV2} G. Farkas and A. Verra, {\em{Prym varieties and moduli of polarized Nikulin surfaces}},
Advances in Mathematics \textbf{290} (2016), 314--328.

\bibitem{vGS} B. van Geemen and A. Sarti, {\em{Nikulin involutions on $K3$ surfaces}}, Math. Zeitschrift \textbf{255} (2007), 731--753.

\bibitem{GKP} D. Greb, S. Kebekus and T. Peternell, {\em{Movable Curves and Semistable Sheaves}}, International Math. Research Notices \textbf{2} (2016), 536--570.

\bibitem{HM} J. Harris and D. Mumford, {\em{On the Kodaira
dimension of $\mm_g$}}, Inventiones Math. \textbf{67} (1982), 23--88.

\bibitem{HL} D. Huybrechts and M. Lehn, {\em{The geometry of moduli spaces of sheaves}}, Second Edition, Cambridge University Press 2010.

\bibitem{ILS} E. Izadi, M. Lo Giudice and G. Sankaran, {\em{The
moduli space of \'etale double covers of genus $5$ is unirational}}, Pacific Journal of Mathematics \textbf{239} (2009), 39--52.

\bibitem{La} R. Lazarsfeld, {\em{Brill-Noether-Petri without degenerations}}, Journal of Diff. Geometry \textbf{23} (1986), 299--307.

\bibitem{Mu} S. Mukai, {\em{On the moduli space of bundles on $K3$ surfaces}}, in: Vector bundles on algebraic varieties, Tata Institute of Fundamental Research Studies in Mathematics, vol. 11 (1987), 341--413.

\bibitem{V1} A. Verra, {\emph{A short proof of the unirationality of $\cA_5$}}, Indagationes Math. \textbf{46} (1984), 339--355.

\bibitem{V2} A. Verra, {\em{On the universal principally polarized abelian variety of dimension $4$}}, in: Curves and abelian varieties (Athens, Georgia, 2007)  Contemporary Mathematics vol. 345, 2008, 253--274.

\end{thebibliography}

\end{document}